\documentclass[12pt]{amsart}

\usepackage[english]{babel}

\usepackage[a4paper]{geometry}
\usepackage{mathalfa}
\usepackage{tikz} 
\usepackage{caption}
\usepackage{subcaption}
\usetikzlibrary{patterns}
\usepackage{amsmath,amssymb,amsfonts,amsthm,a4,fleqn,bbm,dsfont,enumerate}
\usepackage{enumitem,cite}

\newtheorem{definition}{Definition}[section] 
\newtheorem{lemma}[definition]{Lemma}
\newtheorem{example}[definition]{Example}
\newtheorem{theorem}[definition]{Theorem}
\newtheorem{proposition}[definition]{Proposition}
\newtheorem{corollary}[definition]{Corollary}
\newtheorem{remark}[definition]{Remark}

\usepackage{graphicx}
\graphicspath{C:\Users\akhil\OneDrive\Desktop\Library Mathematics\Other}
\usepackage[colorlinks=true, allcolors=blue]{hyperref}
\makeatletter
\newcommand{\ostar}{\mathbin{\mathpalette\make@circled*}}
\newcommand{\make@circled}[2]{
	\ooalign{$\m@th#1\smallbigcirc{#1}$\cr\hidewidth$\m@th#1#2$\hidewidth\cr}
}
\newcommand{\smallbigcirc}[1]{
	\vcenter{\hbox{\scalebox{0.77778}{$\m@th#1\bigcirc$}}}
}
\@namedef{subjclassname@2020}{
	\textup{2020} Mathematics Subject Classification}
\makeatother

\begin{document}
	\title{Persistence diagrams of finite topological spaces}
	\author[A. Badra]{Akhilesh Badra}
	\address{AKHILESH BADRA; Department of Mathematics, University of Delhi,
		Delhi--110 007, India}
	\email{akhileshbadra028@gmail.com}
	\author[H. K. Singh]{Hemant kumar Singh}
	\address{HEMANT KUMAR SINGH; Department of Mathematics, University of Delhi,
		Delhi--110 007, India}
	\email{hemantksingh@maths.du.ac.in}
	\thanks{The first author is supported by research grant from the Council of Scientific and Industrial Research (CSIR), Ministry of Science and Technology, Government of India with reference number: 09/0045(13774)/2022-EMR-I}
	\thanks{The second author is supported by the IoE Grant of the University of Delhi  with Ref. No. IoE/2025- 26/12/FRP}

	\subjclass[2020]{Primary 55N31; Secondary 06A11}
	\keywords{Persistent homology, finite topological spaces, furtherness, order complex, stability. }

	\maketitle
	\begin{abstract}
		In this paper, we apply persistent homology to study finite topological spaces by associating persistence diagrams with their elements through the furtherness function. This approach enables us to classify elements according to their topological evolution within the space. We further establish a bottleneck stability theorem  showing that the bottleneck distance between the persistence diagrams associated with two elements is bounded above by their mutual furtherness.
	\end{abstract}

	\section{\textbf{\large{Introduction}}}
	
	In recent years, persistent homology has become an important tool in data analysis and has also contributed to new ways of studying metric spaces. In persistent homology, a persistence diagram records topological features of a geometric object at various scales as a multiset of points in the extended plane. These diagrams are constructed from a filtration of a topological space by recording when homological features appear and when they disappear \cite{Zomorodian}. In many applications, it is not enough to compute persistence diagrams, one also needs a way to measure how similar they are. A notion of similarity between persistence diagrams, known as the bottleneck distance, was introduced by Cohen-Steiner et al. in 2007 \cite{Cohen-Steiner}.\par
	Finite topological spaces have a natural combinatorial interpretation via preorders and posets. In 1937, Alexandroff established a fundamental connection between finite topological spaces and finite partially ordered sets\cite{Alexandroff}. In particular, finite $T_0$ spaces are in one-to-one correspondence with finite partially ordered sets. Later, in 1966, R. E. Stong and McCord used combinatorial methods to study the homotopy types of finite spaces and identified a connection between finite spaces and compact polyhedra, respectively \cite{McCord,Stong}. In our earlier work, we introduced the notion of furtherness on finite topological spaces, showing that every finite $T_0$ space can be viewed as an asymmetric metric space \cite{Badra}. We also defined a furtherness matrix to study the behavior of elements in such spaces. \par
	
	This work develops a new framework connecting finite topological spaces, order structures, and persistent homology through the notion of furtherness. Filtrations, persistence modules, and persistence diagrams are constructed for elements of finite spaces, providing a new way to study the topological evolutions  within finite spaces.\par
	
	Precisely, in this paper, we associate a persistence module and persistence diagram with each element of a finite $T_0$ topological space using a filtration induced by its furtherness function. We characterize the birth and death of persistence classes in terms of the minimal and cone points of the space, and use these characterizations to classify elements according to the similarity of their persistence diagrams. This classification gives rise to a natural equivalence relation whose equivalence classes exhibit  order theoretic properties and lead to a quotient space encoding all distinct topological evolutions within the space. Furthermore, we establish a bottleneck stability result showing that the associated persistence diagrams are stable under perturbation of the furtherness function, and extend the entire framework from finite $T_0$ spaces to arbitrary finite spaces through furtherness-preserving maps.
	\par

	The paper is organized as follows. In Section \ref{2}, we recall the necessary preliminaries on persistent homology and furtherness in finite topological spaces. In Section \ref{3}, we construct a filtration associated with each element $x$ of a finite $T_0$ space $X$ and define the corresponding persistence module $V_x$. Section \ref{4} is devoted to describe the structure of the associated persistence diagram $dgm(V_x)$. Section \ref{5} develops an equivalence relation determined by the persistence diagrams of elements of a finite $T_0$ space. This relation leads to a classification of elements and reveals structural properties of the induced equivalence classes. In Section \ref{6}, we extend the constructions and results obtained for finite $T_0$ spaces to arbitrary finite spaces. Finally, in Section \ref{7}, we establish a stability result by relating the bottleneck distance between the persistence diagrams associated with two elements to their mutual furtherness.\par 
	Throughout this paper, the term finite space refers to a finite topological space. Likewise, a finite $T_0$ space means a finite topological space satisfying the $T_0$ separation axiom.

	\section{\textbf{\large{Preliminaries}}}\label{2}
	In this section, we review the basics of persistent homology and furtherness in finite topological spaces.
	
	Persistent homology studies the evolution of topological features in a parameterized family of spaces using algebraic methods. We recall the necessary definitions and results of persistent homology: For details, see \cite{Carlson},  \cite{Chazal}, \cite{Dey} and \cite{Edelsbrunner}.

	Let $(I,\leq)$ be a totally ordered set, regarded as a category whose objects are the elements of $I$, with a unique morphism $i\rightarrow j$ whenever $i\leq j$. Let $\textbf{Vect}_{\mathbb{K}}$ denote the category of vector spaces over a field $\mathbb{K}$.
	
	\begin{definition}
		A persistence module indexed by $I$ is a covariant functor $ V : I \longrightarrow \textbf{Vect}_{\mathbb{K}}$.
	\end{definition}
	
	Explicitly, this consists of a family of vector spaces $\{V_i\}_{i\in I}$ together with linear maps $v_{i,j} : V_i \rightarrow V_j, ~\forall~ i \leq  j,$ such that $v_{i,i} =$ id and $v_{j,k} \circ v_{i,j} = v_{i,k}$ whenever $i \leq j \leq k$.\par 

	Persistent homology studies persistence modules that arise from filtrations via homology \cite{Oudot}. Let $\textbf{Top}$ denote the category of topological spaces and continuous maps. A \textit{filtration} indexed by $I$ is a functor $\mathcal{F} : I \longrightarrow \textbf{Top}$.
	A filtration $\mathcal{F}$ together with the homology functor $H_n(-,\mathbb{K})$ naturally gives rise to a persistence module.

	Given a simplicial complex $K$ and a monotonic function $f: K\rightarrow \mathbb{R}$, one obtains a filtration of $K$. By monotonicity, we mean that $f$ is non-decreasing along increasing chains of faces; that is, $f(\sigma)\leq f(\tau)$ whenever $\sigma$ is a face of $\tau$. This ensures that, for each $a\in \mathbb{R}$, the sublevel set $K_a = f^{-1}(-\infty,a]$ is a subcomplex of $K$. In this way, we obtain an increasing sequence $\emptyset \subseteq 	K_0 \subseteq K_1 \subseteq \cdots \subseteq K_n =K$, which forms a filtration $\mathcal{F}_f$ of $K$.

	\begin{definition}[\!\!\cite{Dey}]
		The persistence diagram $dgm_p(\mathcal{F}_f)$ of a filtration $\mathcal{F}_f$ induced by a function $f$ is the multiset
		of points $(a, b)\in \overline{\mathbb{R}}^2$ (extended real plane), each taken with its corresponding multiplicity, representing the birth time $``a"$ and death time $``b"$ of a $p$-dimensional homology class in the associated persistence module $V$. By convention, every point on the diagonal $\Delta=\{(t,t) ~|~ t\in \overline{\mathbb{R}}\}$ is included with infinite multiplicity.
	\end{definition}

	 Note that the (index) persistence of a class  $(a, b)\in dgm_p(\mathcal{F}_f)$ is $b-a$, which measures the lifetime of the corresponding homological feature. Furthermore, the elder rule states that whenever two classes merge, the older class persists while the younger class dies \cite{Edelsbrunner}. 
	
	\begin{definition}[\!\!\cite{Dey}]
		Let $X$ and $Y$ be two persistence diagram and $\Pi$ denote the set of all bijections $\pi:X\rightarrow Y$.  Consider the distance between the two points $x=(x_1,x_2)$ and $y=(y_1,y_2)$ in $L_\infty$-norm $||x-y||_\infty=\max\{|x_1-y_1|,|x_2-y_2|\}$ with the assumption that $\infty-\infty=0.$ The bottleneck distance between two diagrams $X$ and $Y$ is $d_B(X,Y)=\inf\limits_{\pi\in\Pi}\sup\limits_{x\in X}||x-\pi(x)||_\infty$. 
	\end{definition}
	
	 \begin{proposition}[Stability Theorem For Filtrations \cite{Edelsbrunner}]\label{S}
		{\normalfont
			Let $K$ be a simplicial Complex and $f,g:K\rightarrow \mathbb{R}$ two monotonic functions. For each dimension $p$, the bottleneck distance between the persistence diagram $X=Dgm_p(\mathcal{F}_f)$ and $Y=Dgm_p(\mathcal{F}_g)$ is bounded from above by the $L_{\infty}$-distance between the functions, $d_B(X,Y)\leq ||f-g||_{\infty}$.}
	\end{proposition}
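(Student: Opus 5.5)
This is the classical stability theorem of Cohen-Steiner, Edelsbrunner and Harer. I would prove it in three steps: reduce it to an algebraic statement about persistence modules, prove that statement by an interpolation argument, and finish with a compactness limit.

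\textbf{Step 1: the interleaving.} Put $\varepsilon=\|f-g\|_\infty$. For every $a$, monotonicity shows that $K^f_a=f^{-1}(-\infty,a]$ and $K^g_a$ are subcomplexes of $K$. Moreover, if $f(\sigma)\le a$ then $g(\sigma)\le f(\sigma)+\varepsilon\le a+\varepsilon$. Hence we have inclusions
\[
K^f_a\subseteq K^g_{a+\varepsilon}\subseteq K^f_{a+2\varepsilon}.
\]
Applying $H_p(-,\mathbb{K})$ gives maps $\phi_a:V^f_a\to V^g_{a+\varepsilon}$ and $\psi_a:V^g_a\to V^f_{a+\varepsilon}$. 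These are natural in $a$. Their composites $\psi\circ\phi$ and $\phi\circ\psi$ are the internal structure maps $v_{a,a+2\varepsilon}$, since all maps involved are induced by inclusions. So the two persistence modules are $\varepsilon$-interleaved. Since $K$ is finite, both modules are pointwise finite-dimensional and change only at finitely many values. They therefore decompose into finitely many interval modules, and $dgm_p$ is the multiset of interval endpoints.

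\textbf{Step 2: interpolation (the main obstacle).} The cleanest route avoids the algebraic isometry theorem. Consider the straight-line homotopy $h_t=(1-t)f+tg$ for $t\in[0,1]$. Each $h_t$ is monotonic, being a convex combination of monotonic functions. Also $\|h_s-h_t\|_\infty\le|s-t|\,\varepsilon$. It then suffices to show a local statement: for each $t$ there is a $\delta>0$ such that $d_B(dgm_p(h_s),dgm_p(h_t))\le\|h_s-h_t\|_\infty$ whenever $|s-t|<\delta$.

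To prove the local statement, choose $\delta$ so small that the simplex ordering of $h_s$ is compatible with that of $h_t$. Precisely, whenever $h_t(\sigma)<h_t(\tau)$ we should still have $h_s(\sigma)<h_s(\tau)$. This is possible because $K$ has finitely many simplices. Refine the ties in $h_t$ consistently with $h_s$, and use the resulting common total order of simplices. The standard pairing (reduction) algorithm depends only on the order of simplices, not on the function values. So $h_s$ and $h_t$ pair the same simplices into birth--death pairs $(\sigma,\tau)$, and they leave the same simplices unpaired.

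The diagram points are then $(h(\sigma),h(\tau))$, or $(h(\sigma),\infty)$ for unpaired $\sigma$. Zero-length pairs land on the diagonal $\Delta$. Match each point of $dgm_p(h_t)$ to the point of $dgm_p(h_s)$ coming from the same simplex pair, and match diagonal points to diagonal points. Under this bijection every point moves by at most $\max(|h_s(\sigma)-h_t(\sigma)|,|h_s(\tau)-h_t(\tau)|)\le\|h_s-h_t\|_\infty$ in the $L_\infty$-norm. Points at infinity have $\infty-\infty=0$ in the second coordinate. This step needs the most care: justifying that a common compatible ordering exists is exactly where ties in $h_t$ must be handled.

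\textbf{Step 3: compactness and triangle inequality.} The open intervals $(t-\delta_t,t+\delta_t)$ cover $[0,1]$. By compactness, choose a finite subcover and points $0=t_0<t_1<\dots<t_m=1$ such that consecutive points lie in a common interval of the cover. The triangle inequality for $d_B$, which holds because bijections compose, then gives
\[
d_B(X,Y)\le\sum_i d_B(dgm_p(h_{t_i}),dgm_p(h_{t_{i+1}}))\le\sum_i (t_{i+1}-t_i)\,\varepsilon=\|f-g\|_\infty.
\]
The interleaving of Step 1 serves only as motivation here; the argument that actually proves the proposition is the interpolation of Steps 2 and 3.
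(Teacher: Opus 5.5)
The paper gives no proof of this proposition. It is quoted from Edelsbrunner--Harer (going back to Cohen-Steiner, Edelsbrunner and Harer) and used as a black box in Section~7. Your Steps~2--3 are the standard straight-line-homotopy interpolation argument for the filtration case, and the outline is sound. You are also right that Step~1 is not used; combined with the algebraic isometry theorem it would give a different, more machinery-dependent proof.

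One step does not follow as written. Your local statement compares $h_s$ only with the \emph{centre} $h_t$ of $(t-\delta_t,t+\delta_t)$. In Step~3, however, you only require consecutive partition points $t_i<t_{i+1}$ to lie in a common interval of the subcover.

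\textbf{Why this fails.} Suppose that interval's centre $t$ lies strictly between $t_i$ and $t_{i+1}$. Then $h_{t_i}$ and $h_{t_{i+1}}$ need not admit a common compatible order. If $h_t(\sigma)=h_t(\tau)$ but $(g-f)(\sigma)\neq(g-f)(\tau)$, the two functions order $\sigma,\tau$ oppositely. Their pairings can then differ, so your same-pair matching is not available. Routing through the centre instead costs $(|t_i-t|+|t-t_{i+1}|)\varepsilon$, which equals $(t_{i+1}-t_i)\varepsilon$ only when $t\in[t_i,t_{i+1}]$.

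\textbf{Two repairs.} First, you could build the partition so that every consecutive pair consists of the centre of an interval of the subcover and a point of that same interval. To do this, insert the centres and choose points in the overlaps between consecutive centres.

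Second, and more simply, you can drop compactness. For each pair of simplices, $h_t(\sigma)-h_t(\tau)$ is affine in $t$. Hence the preorder induced by $h_t$ changes only at finitely many times $0<s_1<\dots<s_r<1$. On each closed piece $[s_{i-1},s_i]$, the preorder of the open interior refines the preorders at both endpoints. So a single total order is compatible with every $h_t$ on that piece. Your Step~2 matching then applies directly to its endpoints, giving $d_B\le (s_i-s_{i-1})\varepsilon$ there.

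\textbf{Tie-breaking.} When you break residual ties, break them faces-before-cofaces, which monotonicity makes possible. Then every prefix is a subcomplex, and the reduction genuinely computes the pairs of a simplexwise filtration refining both sublevel-set filtrations.
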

	
	 A considerable amount of work by Alexandroff \cite{Alexandroff}, Stong \cite{Stong}, and McCord \cite{McCord} has shaped our understanding of finite topological spaces through their connections with posets and their homotopy  properties. We recall the basic constructions and results for finite topological spaces: For details, see\cite{Barmak}.\par
	
	Let $X$ be a finite topological space. For each $x\in X$, define
	$U_x := \bigcap\{U \subseteq X | U \text{ is open and } x \in U \}$.
	The collection $\{U_x | x \in X\}$ forms a minimal basis for the topology on X. We have the relation $\leq$ on $X$, defined by  $x \leq y \iff x \in U_y$  is a preorder. Conversely, given a finite preordered set $(X, \leq)$,  the collection of set $U_x := \{ y \in X | y \leq x \}, x \in X$, is a basis for a topology on $X$. This establishes an equivalence between finite topological spaces
	and finite preorders.\par 
	Given a preorder $(X, \leq)$, define an equivalence relation by $x \sim_0 y \iff x \leq y$ and $y \leq  x$.
	The quotient $X/$$\sim_0$ inherits a natural topology whose associated preorder is a partial order, and is homotopically equivalent to $X$.	
	In particular, every finite topological space is homotopy equivalent to a finite $T_0$ space, and the collection of finite $T_0$ spaces has one-to-one correspondence with the collection of  finite posets.  Throughout this paper, a minimal point of a finite $T_0$ space $X$ refers to a minimal element in its associated poset.
	
	\begin{definition}[\!\!\cite{Barmak}]	Let $X$ be a finite $T_0$ space. The order complex $\mathcal{S}(X)$ of $X$ is the simplicial complex whose simplices are the nonempty chains of $X$.
	\end{definition}
	
	If two points are comparable in $X$, then there is an edge between their corresponding vertices in the order comples $S(X)$.

	Now, we discuss furtherness in finite topological spaces introduced in our earlier work \cite{Badra}.
	Let $X$ be a finite space. A nested sequence of open sets around $x\in X$ is a sequence $(U_j)_{j\geq 0}$ of open subsets $U_j$ of $X$ such that  $U_0=U_x$ and $U_j\subset U_{j+1},$ and there is no open set between $U_j$ and $U_{j+1}$ for all $j\geq 0$, where $U_j\subset U_{j+1}$ means $U_j$ is properly contained in $U_{j+1}$.
	
	\begin{definition}[\!\!\cite{Badra}]
		Let $X$ be a finite space and $x,y\in X$. The furtherness $\Psi(x,y)$ of $x$ from $y$ is the smallest whole number $k$ such that there exist a nested sequence $(U_j)_{j\geq0}$ of open sets around $x$ such that $y\in U_k$.\par 
	\end{definition}

	\begin{remark}[\!\!\cite{Badra}]\label{2.10}
		{\normalfont Let $X$ be a finite $T_0$ space. Then, $\Psi(x,y)=|U_y\backslash U_x|, \forall x,y\in X.$}
	\end{remark}

	Note that the above relation $\sim_0$ on a finite space $X$ can also be viewed using furtherness function as $x\sim_0 y$ if $\Psi(x,y)=\Psi(y,x)=0$. Then, the quotient $X/$$\sim$ is $T_0$ and $\Psi_{X/\sim}([x],[y])=\Psi_{X}(x,y)$ for all $x,y\in X$. The furtherness matrix  $\Psi(X)$ of a finite space $X$  with $n$ elements $\{a_i|1\leq i\leq n\}$, is an $n\times n$ matrix, where $(i,j)^{th}$ entry of furtherness matrix is $\Psi(a_i,a_j)$, for $a_i,a_j\in X$. In particular, if $X$ if $T_0$, then $\Psi(X)_{i,j}=|U_{a_{j}}\backslash U_{a_{i}}|, \forall i,j\in X$.\par

	\section{\textbf{\large{Filtration induced by Furtherness}}}\label{3}
	
	In this section, we construct a filtration corresponding to each element of a finite $T_0$ space using its furtherness within the space.

	Let $X$ be a finite $T_0$ space. So, $X$ can be seen as a finite poset. We also have an associate order complex $\mathcal{S}(X)$ with it. Let $x\in X$ be an arbitrary element. We can construct a filtration of $\mathcal{S}(X)$ using the furtherness function of $X$ around $x$.\par 
	Let $X_x^k$ be the collection of all those points of $X$ such that furtherness of $x$ from that point is less than or equals to $k$, where $x\in X$, and $k$ is a non-negetive integer. Notice that, for every $x\in X$ and $k\geq 0$, $X_x^k$ is a subspace of $X$ and $\mathcal{S}(X_x^k)$ is a subcomplex of $\mathcal{S}(X)$. Also, for $k_1\leq k_2$, we have $X_x^{k_{1}} \subseteq X_x^{k_{2}}$, that implies $\mathcal{S}(X_x^{k_{1}})\subseteq \mathcal{S}(X_x^{k_{2}}), \forall x\in X$. Thus, for every $x\in X$, we have a filtration $\{X_x^k ~|~ 0\leq k\leq |X|\}$  of $X$, that starts from $U_x$, and a filtration $\{\mathcal{S}(X_x^k) ~|~ 0\leq k\leq |X|\}$  of $\mathcal{S}(X)$, that starts from $\mathcal{S}(U_x)$. So, corresponding to each element of $X$, we have a filtration of its order complex $\mathcal{S}(X)$. We refer $\mathcal{S}(X_x^k)$ as  the level $k$ of the filtation corresponding to $x$.\par 
	
	We describe this filtration through a monotonic function from the order complex of $X$ to $\mathbb{R}.$ For $x\in X$, consider a function $f_x:\mathcal{S}(X)\rightarrow\mathbb{R}$ such that $f_x(\langle a_0,a_1,...,a_n\rangle )=\max\{\Psi(x,a_i)~|~ 0\leq i\leq n\},$ and $\langle a_0,a_1,...,a_n\rangle
	\in \mathcal{S}(X).$ Let $\langle b_0,b_1,...,b_m\rangle$ be a subcomplex of $\langle a_0,a_1,...,a_n\rangle$. Then $b_j\in \{a_0,a_1,...,a_n\},
	 0\leq j \leq m \implies \Psi(x,b_j)\leq \max\{\Psi(x,a_i)~|~ 0\leq i\leq n\},  0\leq j \leq m \implies f_x(\langle b_0,b_1,...,b_n\rangle )\leq f_x(\langle a_0,a_1,...,a_n\rangle ).$ So, $f_x$ is monotonic. By the  monotonicity of $f_x$, we get that the sublevel set $\mathcal{S}(X)_x^k= f_x^{-1}(-\infty,k]$ is a subcomplex of $\mathcal{S}(X)$. Therefore, we can arrange it in an increasing sequence $\emptyset\subseteq \mathcal{S}(X)_x^0 \subseteq \mathcal{S}(X)_x^1\subseteq ... \subseteq \mathcal{S}(X)_x^n =\mathcal{S}(X)$, for some $n\leq |X|-1$. This sequence of complexes is a filtration of $\mathcal{S}(X)$ obtained from $f_x$.
	Next, we observe that $\mathcal{S}(X_x^k) = \mathcal{S}(X)_x^k, \forall x\in X, k\geq 0.$ From our construction, $\mathcal{S}(X_x^k)$ is the order complex of $X_x^k=\{y\in X|\Psi(x,y)\leq k\}$ and $\mathcal{S}(X)_x^k= f^{-1}(-\infty,k]$ is the subcomples of $\mathcal{S}(X)$. Consider, $a_0\leq a_1\leq ... \leq a_n$ such that $\langle a_0, a_1, ... , a_n\rangle \in \mathcal{S}(X)_x^k$. Note that $f_x(\langle a_0, a_1, ... , a_n\rangle)\leq k \iff \Psi(x,a_i)\leq k \iff a_i\in X_x^k,~ 0\leq i \leq n \iff \langle a_0, a_1, ... , a_n\rangle \in \mathcal{S}(X_x^k)$. So, $\mathcal{S}(X_x^k) = \mathcal{S}(X)_x^k, \forall x\in X, k\geq 0.$ 
	Therefore, the filtration obtained from $f_x$ coincides with the filtration constructed earlier.\par 
	
	So, for every $x\in X$, its associated filtration is a functor $F_x:\mathbb{R}\rightarrow \textbf{Top}$ such that $F_x(a)=  |f_x^{-1}(-\infty,a]|= |\mathcal{S}(X)_x^a|$, where $|K|$ means the geometric carrier of a simplicial complex K.
	 The functoriality of homology gives a persistence module. Thus, for every $x\in X$,  we have an associated persistence module $V_x: \mathbb{R} \rightarrow \mathbf{Vect}_{\mathbb{K}}$ such that $V_x(a)=H_0(F_x(a),\mathbb{K})$, where $H_0(F_x(a),\mathbb{K})$ is the $0^{th}$ homology vector space of topological space $F_x(a)$ with coefficients in a field $\mathbb{K}$.\par 
	 
	 	We now illustrate the filtrations associated with the elements of a space through the following example.

	 	\begin{center}
	 	\begin{tikzpicture}  
	 		[scale=.9,auto=center,every node/.style={circle,fill=blue!10}] 
	 		
	 		\node (r) at (0,1) [ball color=cyan, circle, draw=black, inner sep=0.1cm, label=left:$r$] {};  
	 		\node (s) at (1,0)  [ball color=cyan, circle, draw=black, inner sep=0.1cm, label=left:$s$]{}; 
	 		\node (p) at (2,1) [ball color=cyan, circle, draw=black, inner sep=0.1cm, label=below :$p$]{};
	 		\node (q) at (3,2) [ball color=cyan, circle, draw=black, inner sep=0.1cm, label=below :$q$]{};  
	 		\node (t) at (4,1) [ball color=cyan, circle, draw=black, inner sep=0.1cm, label=below :$t$]{};  
	 		\node (u) at (5,1.5) [ball color=cyan, circle, draw=black, inner sep=0.1cm, label=below :$u$]{};

	 		\draw (r) -- (s);
	 		\draw (s) -- (p); 
	 		\draw (p) -- (q);
	 		\draw (q) -- (t);
	 		\draw (u) -- (t);

	 		\node[ball color=black, circle, draw=black, inner sep=1pt,label=below:$s$] (s) at (8,0) {};
	 		\node[ball color=black, circle, draw=black, inner sep=1pt, label=below:$p$] (p) at (10,0) {};
	 		\node[ball color=black, circle, draw=black, inner sep=1pt,label=above:$q$] (q) at (9,1.732) {};
	 		\node[ball color=black, circle, draw=black, inner sep=1pt,label=above:$r$] (r) at (7,1) {};
	 		\node[ball color=black, circle, draw=black, inner sep=1pt,label=above:$t$] (t) at (10,1) {};
	 		\node[ball color=black, circle, draw=black, inner sep=1pt,label=above:$u$] (u) at (11,1.732) {};

	 		\draw (s) -- (p) -- (q) -- (s);
	 		\draw (r) -- (s);
	 		\draw (q) -- (t);
	 		\draw (u) -- (t);

	 		\fill[pattern=north east lines, pattern color=black] (8,0) -- (10,0) -- (9,1.732) -- cycle;
	 	
	 	\end{tikzpicture}  
	 \end{center}
	 \hspace{40mm}	$X$ \hspace{15mm} Example \ref{example1} \hspace{20mm} $\mathcal{S}(X)$\\

	\begin{example}\label{example1}
		{\normalfont
			Let $X=\{p,q,r,s,t,u\}$ be a finite poset, whose Hasse diagram is shown in the figure above. Then $X$ is a finite $T_0$ space.\par 
			By Remark \ref{2.10}, for $s\in X$, we have  $X_s^0=\{s\}, X_s^1=\{s,r,p,t\}, X_s^2=\{s,r,p,t,u\},\\
			 X_s^3=X,$ and fot $t\in X$, we have $X_t^0=\{t\}, X_t^1=\{s,t,u\}, X_t^2=\{s,r,p,t,u\}$, $X_t^3=X.$

			\begin{center}
				\begin{tikzpicture}  
					[scale=.9,auto=center,every node/.style={circle,fill=blue!10}]
					\node (a) at (1.5,0)[ball color=black, circle, draw=black, inner sep=1pt,label=below:$s$]{};

					\node (r) at (3,1) [ball color=black, circle, draw=black, inner sep=1pt,label=below:$r$] {};  
					\node (s) at (4,0) [ball color=black, circle, draw=black, inner sep=1pt,label=below:$s$]{}; 
					\node (p) at (5,1) [ball color=black, circle, draw=black, inner sep=1pt,label=below:$p$]{}; 
					\node (t) at (6,1) [ball color=black, circle, draw=black, inner sep=1pt,label=below:$t$]{};

					\draw (r) -- (s);
					\draw (s) -- (p); 
					
					\node (1) at (8,1) [ball color=black, circle, draw=black, inner sep=1pt,label=below:$r$] {};  
					\node (2) at (9,0)  [ball color=black, circle, draw=black, inner sep=1pt,label=below:$s$]{}; 
					\node (3) at (10,1) [ball color=black, circle, draw=black, inner sep=1pt,label=below:$p$]{}; 
					\node (4) at (11,1) [ball color=black, circle, draw=black, inner sep=1pt,label=below:$t$]{};  
					\node (5) at (11.5,1.5) [ball color=black, circle, draw=black, inner sep=1pt,label=below:$u$]{};  
					
					\draw (1) -- (2);
					\draw (2) -- (3); 
					\draw (4) -- (5);

					\node[ball color=black, circle, draw=black, inner sep=1pt,label=below:$s$] (s') at (14,0)  {};
					\node[ball color=black, circle, draw=black, inner sep=1pt, label=below:$p$] (p') at (16,0) {};
					\node[ball color=black, circle, draw=black, inner sep=1pt,label=above:$q$] (q') at (15,2) {};
					\node[ball color=black, circle, draw=black, inner sep=1pt,label=above:$r$](r') at (13.5,1) {};
					\node[ball color=black, circle, draw=black, inner sep=1pt,label=above:$t$] (t') at (16,1) {};
					\node[ball color=black, circle, draw=black, inner sep=1pt,label=above:$u$] (u') at (16.5,1.5) {};

					\draw (r') -- (s');
					\draw (s') -- (p'); 
					\draw (s') -- (q');
					\draw (p') -- (q');
					\draw (q') -- (t');
					\draw (u') -- (t');

					\fill[pattern=north east lines, pattern color=black] (14,0) -- (15,2) -- (16,0) -- cycle;

				\end{tikzpicture}

			\end{center}
			$\mathcal{S}(X_s^0)$ \hspace{20mm} $\mathcal{S}(X_s^1)$ \hspace{27mm} $\mathcal{S}(X_s^2)$ \hspace{25mm} $\mathcal{S}(X_s^3)=\mathcal{S}(X)$

			\begin{center}
				\begin{tikzpicture}  
					[scale=.9,auto=center,every node/.style={circle,fill=blue!10}] 
					\node (a) at (1.5,0)[ball color=black, circle, draw=black, inner sep=1pt,label=below:$t$]{};

					\node (s) at (3.5,0) [ball color=black, circle, draw=black, inner sep=1pt,label=below:$s$]{}; 
					\node (t) at (4.5,0.5) [ball color=black, circle, draw=black, inner sep=1pt,label=below:$t$]{};  
					\node (u) at (5.5,1) [ball color=black, circle, draw=black, inner sep=1pt,label=below:$u$]{};

					\draw (u) -- (t);

					\node (1) at (8,1) [ball color=black, circle, draw=black, inner sep=1pt,label=below:$r$] {};  
					\node (2) at (9,0)  [ball color=black, circle, draw=black, inner sep=1pt,label=below:$s$]{}; 
					\node (3) at (10,1) [ball color=black, circle, draw=black, inner sep=1pt,label=below:$p$]{}; 
					\node (4) at (11,1) [ball color=black, circle, draw=black, inner sep=1pt,label=below:$t$]{};  
					\node (5) at (11.5,1.5) [ball color=black, circle, draw=black, inner sep=1pt,label=below:$u$]{};  
					
					\draw (1) -- (2);
					\draw (2) -- (3); 
					\draw (4) -- (5);

					\node[ball color=black, circle, draw=black, inner sep=1pt,label=below:$s$] (s') at (14,0)  {};
					\node[ball color=black, circle, draw=black, inner sep=1pt, label=below:$p$] (p') at (16,0) {};
					\node[ball color=black, circle, draw=black, inner sep=1pt,label=above:$q$] (q') at (15,2) {};
					\node[ball color=black, circle, draw=black, inner sep=1pt,label=above:$r$](r') at (13.5,1) {};
					\node[ball color=black, circle, draw=black, inner sep=1pt,label=above:$t$] (t') at (16,1) {};
					\node[ball color=black, circle, draw=black, inner sep=1pt,label=above:$u$] (u') at (16.5,1.5) {};

					\draw (r') -- (s');
					\draw (s') -- (p'); 
					\draw (s') -- (q');
					\draw (p') -- (q');
					\draw (q') -- (t');
					\draw (u') -- (t');

					\fill[pattern=north east lines, pattern color=black] (14,0) -- (15,2) -- (16,0) -- cycle;

				\end{tikzpicture}

			\end{center}
			
			$\mathcal{S}(X_t^0)$ \hspace{15mm} $\mathcal{S}(X_t^1)$ \hspace{30mm} $\mathcal{S}(X_t^2)$ \hspace{25mm} $\mathcal{S}(X_t^3)=\mathcal{S}(X)$\\

			Similarly, for $p\in X$, we have $ X_p^0=\{s,p\}, X_p^1=\{s,r,p,t\}, X_p^2=X,$ and for  $q\in X$, we have $ X_q^0=\{s,p,t,q\}, X_q^1=X.$
			
			\begin{center}
				\begin{tikzpicture}  
					[scale=.9,auto=center,every node/.style={circle,fill=blue!10}]
					\node (a) at (1,0)[ball color=black, circle, draw=black, inner sep=1pt,label=below:$s$]{}; 
					\node (b) at (1,1) [ball color=black, circle, draw=black, inner sep=1pt,label=above:$p$]{}; 
					
					\draw (a) -- (b);

					\node (r) at (4,1) [ball color=black, circle, draw=black, inner sep=1pt,label=below:$r$] {};  
					\node (s) at (5,0) [ball color=black, circle, draw=black, inner sep=1pt,label=below:$s$]{}; 
					\node (p) at (6,1) [ball color=black, circle, draw=black, inner sep=1pt,label=below:$p$]{}; 
					\node (t) at (7,1) [ball color=black, circle, draw=black, inner sep=1pt,label=below:$t$]{};

					\draw (r) -- (s);
					\draw (s) -- (p);

					\node[ball color=black, circle, draw=black, inner sep=1pt,label=below:$s$] (s') at (10,0)  {};
					\node[ball color=black, circle, draw=black, inner sep=1pt, label=below:$p$] (p') at (12,0) {};
					\node[ball color=black, circle, draw=black, inner sep=1pt,label=above:$q$] (q') at (11,2) {};
					\node[ball color=black, circle, draw=black, inner sep=1pt,label=above:$r$](r') at (9.5,1) {};
					\node[ball color=black, circle, draw=black, inner sep=1pt,label=above:$t$] (t') at (12,1) {};
					\node[ball color=black, circle, draw=black, inner sep=1pt,label=above:$u$] (u') at (12.5,1.5) {};

					\draw (r') -- (s');
					\draw (s') -- (p'); 
					\draw (s') -- (q');
					\draw (p') -- (q');
					\draw (q') -- (t');
					\draw (u') -- (t');

					\fill[pattern=north east lines, pattern color=black] (10,0) -- (11,2) -- (12,0) -- cycle;

				\end{tikzpicture}

			\end{center}

			\hspace{10mm} $\mathcal{S}(X_p^0)$ \hspace{30mm} $\mathcal{S}(X_p^1)$ \hspace{30mm} $\mathcal{S}(X_p^2)=\mathcal{S}(X)$

			\begin{center}
				\begin{tikzpicture}  
					[scale=.9,auto=center,every node/.style={circle,fill=blue!10}]

					\node (s) at (2,0) [ball color=black, circle, draw=black, inner sep=1pt,label=below:$s$]{}; 
					\node (p) at (4,0) [ball color=black, circle, draw=black, inner sep=1pt,label=below:$p$]{}; 
					\node (t) at (4,1) [ball color=black, circle, draw=black, inner sep=1pt,label=below:$t$]{};  
					\node (q) at (3,2) [ball color=black, circle, draw=black, inner sep=1pt,label=above:$q$] {};

					\draw (p) -- (q);
					\draw (s) -- (p); 
					\draw (t) -- (q); 
					\draw (s) -- (q); 
					
					\fill[pattern=north east lines, pattern color=black] (4,0) -- (3,2) -- (2,0) -- cycle;

					\node[ball color=black, circle, draw=black, inner sep=1pt,label=below:$s$] (s') at (8,0)  {};
					\node[ball color=black, circle, draw=black, inner sep=1pt, label=below:$p$] (p') at (10,0) {};
					\node[ball color=black, circle, draw=black, inner sep=1pt,label=above:$q$] (q') at (9,2) {};
					\node[ball color=black, circle, draw=black, inner sep=1pt,label=above:$r$](r') at (7.5,1) {};
					\node[ball color=black, circle, draw=black, inner sep=1pt,label=above:$t$] (t') at (10,1) {};
					\node[ball color=black, circle, draw=black, inner sep=1pt,label=above:$u$] (u') at (11,1.5) {};

					\draw (r') -- (s');
					\draw (s') -- (p'); 
					\draw (s') -- (q');
					\draw (p') -- (q');
					\draw (q') -- (t');
					\draw (u') -- (t');

					\fill[pattern=north east lines, pattern color=black] (10,0) -- (9,2) -- (8,0) -- cycle;

				\end{tikzpicture}

			\end{center}

			\hspace{30mm} $\mathcal{S}(X_q^0)$ \hspace{40mm} $\mathcal{S}(X_q^1)=\mathcal{S}(X)$

		}
	\end{example}

	In example \ref{example1}, we can notice that different points of a space $X$ may give different topological evolution of associated order complex $\mathcal{S}(X)$ with respect to furtherness. Filtration corresponding to   $s$ and $t$ had 3 different levels before completing the order complex of $X$, where as filtration of $p$ and $q$ has just 2 and 1 levels, respectively.\par
	
	We can also notice that during these filtrations some connected components appear, grow and merge into each other. As in the filtration corresponding to $s$, level 0 has just one connected component $\{s\}$, and at level 1 of this filtration a new component $\{t\}$ appears and at level 3, both of these component merge into each other.\par
	
 The persistence module $V_x$ associated to an element $x$ of a finite $T_0$ space $X$, can be used to measure the scale or resolution of a topological feature. Using connectedness as a feature in  this persistence module, we can observe  that when two elements have  the same topological evolution with respect to furtherness and what kind of similarities are their in this evolution.

	\section{\textbf{\large{Persistence diagrams of finite $T_0$ spaces}}}\label{4}
	
	In this section, we discuss results describing the structure of persistence diagram of persistence module associated with an element of finite $T_0$ space.\par 
	Let $X$ be a finite $T_0$ space. For each $x\in X$, we denote the persistence diagram of the persistence module $V_x$ by $dgm(V_x)$. In these persistence diagrams, classes are corresponding to the connected components of the space at different levels of filtration. So, corresponding to each connected components of $\mathcal{S}(X)$, we have a class of infinite persistence in $dgm(V_x), ~\forall~ x\in X$.
	
	The following result describes when a class appear in the persistence diagram
	of a persistence module $V_x$.
	
	\begin{theorem}\label{A}
		Let $X$ be a finite $T_0$ space and $x\in X$. Then
		\begin{itemize}
			\item[$(i)$] $[0,\infty)\in dgm(V_x)$ with multiplicity one.
			\item[$(ii)$] If $X$ has $m$ minimal points which are not comparable with $x$, then $m$ new classes appear at level 1.
			\item[$(iii)$] No new class appear after level 1.
		\end{itemize}
		
	\end{theorem}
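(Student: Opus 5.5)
The plan is to describe the level sets $X_x^k$ directly via Remark \ref{2.10}, which gives $X_x^k=\{y\in X : |U_y\setminus U_x|\leq k\}$. Every new $H_0$-class is born as a vertex $y$ of $\mathcal{S}(X)$ that enters the filtration at level $k=\Psi(x,y)$ and is not joined, inside $\mathcal{S}(X_x^k)$, to a vertex already present. So we must decide, for each level, which newly added points start new components.

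For $(i)$, level $0$ is $X_x^0=\{y : U_y\subseteq U_x\}=U_x$. Since $U_x$ has maximum $x$, its order complex is a cone with apex $x$, so it is connected and nonempty. Hence exactly one class is born at level $0$. This is the oldest class, and by the elder rule it never dies. The result is the point $[0,\infty)$ with multiplicity one; no other class is born at $0$ because $H_0(\mathcal{S}(U_x))$ is one-dimensional.

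For $(ii)$, consider a minimal point $m$ of $X$ not comparable with $x$. Then $U_m=\{m\}$ and $m\notin U_x$, so $\Psi(x,m)=1$ and $m\in X_x^1$. The claim is that each such $m$ gives a new component at level $1$, distinct from the old component and from the other such minimal points. Suppose $m$ is joined by a path in $\mathcal{S}(X_x^1)$ to another vertex. Some neighbour $z$ of $m$ on that path is comparable with $m$, so $z>m$. Then $U_z\setminus U_x$ contains $z$ and $m$, because $z\notin U_x$: otherwise $m\leq z\leq x$, contradicting incomparability. Thus $\Psi(x,z)\geq 2$ and $z\notin X_x^1$, a contradiction. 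So each such $m$ is an isolated vertex of $\mathcal{S}(X_x^1)$, and these contribute $m$ distinct new classes born at $1$.

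Conversely, I must show that no other point of $X_x^1\setminus U_x$ creates a class. Take such a $y$ with $|U_y\setminus U_x|=1$, so $U_y\setminus U_x=\{y\}$. If $y$ is not minimal, every $w<y$ lies in $U_x$, and the edge $\{w,y\}$ lies in $\mathcal{S}(X_x^1)$. Hence $y$ is attached to the level-$0$ component, which is still the cone $\mathcal{S}(U_x)$.

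For $(iii)$, let $y$ enter at level $k\geq 2$, so $|U_y\setminus U_x|=k$ and $y$ is not minimal. Choose $w<y$ to be a maximal element of $U_y\setminus\{y\}$. Then $U_w\subsetneq U_y$, so $U_w\setminus U_x\subseteq (U_y\setminus U_x)\setminus\{y\}$. This gives $\Psi(x,w)\leq k-1$, so $w$ is already present, and the edge $\{w,y\}$ appears at level $k$. Hence $y$ joins an existing component and creates no new class.

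The main obstacle is the bookkeeping for $(ii)$. I must be careful that "new class appears" counts components born at level $1$, and must check both directions: isolated minimal points give classes, and non-minimal level-$1$ points do not. I also note the trivial case: if $y$ is minimal and comparable with $x$, then $y\leq x$, so $y$ is already in $U_x$.
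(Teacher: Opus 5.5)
Your proof is correct and uses the same level-by-level strategy as the paper: level $0$ is the cone $\mathcal{S}(U_x)$, incomparable minimal points enter at level $1$ as new components, and later points attach to earlier ones. It is in fact more complete, because the paper only shows that an incomparable minimal point $a$ has no neighbour in $U_x$, whereas your observation that any $z>a$ has $\Psi(x,z)\geq 2$ makes $a$ isolated in $\mathcal{S}(X_x^1)$, and you also check that non-minimal level-$1$ points attach to $\mathcal{S}(U_x)$, which is needed to get exactly $m$ births; for $(iii)$ the paper instead uses that every minimal point is present by level $1$, while your drop $\Psi(x,w)\leq \Psi(x,y)-1$ works for any $w<y$, so the maximality of $w$ is not needed.
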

	\begin{proof}
		Let $a$ be a minimal point of $X$. Then, either $a\leq x$ or $x$ is not comparable with $a$. Thus, by Remark \ref{2.10}, we get either $\Psi(x,a)=0$ or $\Psi(x,a)=1$.\par 
		First, suppose $\Psi(x,a)=0$. At level $0$, we know that  $\mathcal{S}(U_x)$ appears. A class corresponding to the connected component of $x$ appears in $dgm(V_x)$. As $U_x$ is connected, $\mathcal{S}(U_x)$ is connected. So, just one class $[0,-)$ appears at level 0 in $dgm(V_x)$. By elder rule, if any class merge with this class at any level, this class $[0,-)$ will go on, and merged class will end at that level. Therefore, this class will have infinite persistence. Hence $[0,\infty) \in dgm(V_x)$ and has multipicity 1.\par 
		
		If $\Psi(x,a)=1$, then $a$ is  not comparable with $x$. In this case, a new connected component will appear at level 1 corresponding to $a$. Note that $X_x^0= U_x$ is the collection all those points such that furtherness of $x$ from those points is 0. If $t\in U_x$ is comparable with $a$, then $\Psi(t,a)=0 \implies \Psi(x,a)\leq \Psi(a,t)+\Psi(t,a)=0 \implies x$ is comaparable with $a$, a contradiction. So, $X_x^0= U_x$ does not contain any point which is comparable with $a$. Thus, there is no vertex in $\mathcal{S}(X_x^0)$ that share an edge with $a$ in $\mathcal{S}(X)$. So, at level 1, when $a$ appears, it does not share any edge with the vertices of level 0. Therefore, it forms a new connected component at level 1. A new class $[1,n_a)$ appeares in $dgm(V_x)$ corresponding to $a$, where $n_a\in \mathbb{N}\cup \{\infty\}$ and $n_a>1$. Hence, if there are $m$ minmal points in $X$ not comparable with $x$,  then $m$ new classes appear at level 1.\par 
		After level 1, if any point $y$ appears, then $y$ is not a minimal point and it is comparable to at least one of the minimal points appeared in level 0 or 1. So, if $y$ appears in $X_x^t, t>1$, then $y$ appears with an edge joining $y$ with an earlier appeared vertex (corresponding to a minimal point) in $S(X_x^t)$. Thus, no new connected component created corresponding to $y$, and hence, no new class starts from $t$ in $dgm(V_x)$, for $t>1$. 
	\end{proof}
	
	 Above results describe the structure of the persistence diagram $dgm(V_x)$ and relate it to the minimal points of X. From above theorem we can notice that level 0 and level 1 of the filtration are the only levels at which a class can appear in $dgm(V_x)$. If a class appears at level 0 then it is of inifinite persistence. And if a class starts at level 1, it means it is appeared due to the existence of a minimal point of $X$ which is not comparable to $x$. Note that, in a space with just one minimal point, every persistence diagram has exactly one class $[0,\infty)$ and has no class of finite persistence.\par 
	
	From Theorem \ref{A}, it follows that for each minimal point of $X$ that is not comparable to $x$, there is a corresponding class in $dgm(V_x)$. In addition, all minimal points that are comparable to $x$ contribute to a single class in $dgm(V_x)$. These are all possible classes in $dgm(V_x)$. So, we have

	\begin{corollary}\label{B}
		Let $X$ be a finite $T_0$ space and $x\in X$. Then number of classes in the persistence diagram $dgm(V_x)$ is less than or equal to number of minimal points of $X$.
	\end{corollary}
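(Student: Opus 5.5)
The plan is to count the classes of $dgm(V_x)$ by their birth level, using Theorem \ref{A}. Here a ``class'' means an off-diagonal point counted with multiplicity. In $0$-dimensional persistence each class is born exactly when a new connected component appears in the filtration $\{\mathcal{S}(X_x^k)\}$. So the number of classes equals the total number of components created over all levels. By Theorem \ref{A}$(iii)$, components are created only at levels $0$ and $1$, so only two birth levels need to be examined.

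\textbf{Level $0$.} Here $\mathcal{S}(X_x^0)=\mathcal{S}(U_x)$, which is connected since $U_x$ has the maximum $x$. Exactly one class is born, namely $[0,\infty)$ of Theorem \ref{A}$(i)$. Since $X$ is finite and nonempty, $U_x$ contains at least one minimal point of $X$: take any minimal element below $x$. Let $m_0\geq 1$ be the number of minimal points of $X$ comparable with $x$. Every minimal point comparable with $x$ is $\leq x$, because $a$ is minimal and comparable with $x$. Hence all $m_0$ of these points lie in $U_x$ and contribute the single class born at level $0$.

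\textbf{Level $1$.} By Theorem \ref{A}$(ii)$, the number of classes born at level $1$ is exactly $m_1$, the number of minimal points of $X$ not comparable with $x$. The one point to verify is that the proof of Theorem \ref{A} also shows \emph{only} these are born at level $1$. Any non-minimal $y$ appearing at level $1$ lies above some minimal point $a$. That $a$ has $\Psi(x,a)\leq 1$, so $a$ is already present, and $y$ attaches to it by an edge; the argument of part $(iii)$ applies verbatim to $t=1$. Consequently each new component at level $1$ contains a minimal point not comparable with $x$, and distinct new components contain distinct such points. This gives an injection from level-$1$ births into that set of minimal points.

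Adding up, the number of classes is $1+m_1\leq m_0+m_1$, which is the number of minimal points of $X$, since $m_0\geq 1$. The only delicate step is the injectivity at level $1$. If the paper intends Theorem \ref{A}$(ii)$ to count births with possible merging of several minimal points into one component at level $1$, the injection argument above still yields ``at most $m_1$,'' and the inequality holds regardless.
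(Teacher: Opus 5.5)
Your proof is correct and follows the paper's own argument. All the minimal points comparable with $x$ (there is at least one) lie in $U_x$ and contribute the single class $[0,\infty)$. Each other class is born at level $1$ and corresponds to a distinct minimal point not comparable with $x$, which gives $1+m_1\le m_0+m_1$. Your injection argument at level $1$ makes explicit the paper's assertion that ``these are all possible classes,'' which the paper states without separate justification.
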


	\begin{remark}\label{F}
		{\normalfont
			Let $|dgm(V_x)|$ denote the cardinality of the multiset $dgm(V_x)$, then by Theorem \ref{A}, we have $|dgm(V_x)|= m+1$, where $m$ is number of minimal points of finite $T_0$ space $X$ that are not comparable with $x$.
		}
	\end{remark}

	In example \ref{example1}, we can notice that, in $dgm(V_s)$, one class starts from level 0 corresponding to $U_s$ and one class starts from level 1 corresponding to $t$, which is the only minimal point of $X$ not compararable to $s$. We can also observe that at level 3, when $q$ enters $X_s^3$, both of the these classes merge to each other because $q$ is comparable with both $s$ and $t$. \par
	In the next result, we observe that when a point appears in $X_x^t$, which is comparable to two minimal points of different componets of previous level, and a class disappear in $dgm(V_x)$.

	\begin{theorem}\label{D}
		Let $X$ be a finite $T_0$ space and $x\in X$. Let $b$ be a minimal point of $X$ not comparable with $x$. If a class born at level 1 in $dgm(V_x)$ corresponding to $b$, dies at level $n_b<\infty,$ then there is an element in $X_x^{n_b}$ which is comparable with both $b$ and another minimal point of $X$. Moreover, if no such element exists then $n_b=\infty$.
	\end{theorem}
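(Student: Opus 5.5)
The argument follows the connected component $C_b$ of $\mathcal{S}(X_x^k)$ containing $b$ as $k$ increases from $1$. By Theorem \ref{A}, the class of $b$ is born at level $1$. By the elder rule, it dies at the first level $n_b$ at which $C_b$ merges with a component that is at least as old. A class born at level $1$ can only die by merging, so the task is to describe what happens in the complex at level $n_b$.

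The first step is structural. Every point $y\in X$ lies above at least one minimal point $a$. Since $\Psi(x,a)\le 1$ and $\Psi(x,a)\le\Psi(x,y)$ whenever $a\le y$ (because $U_a\subseteq U_y$ and Remark \ref{2.10} applies), every vertex of $\mathcal{S}(X_x^k)$ is joined to a minimal point already present at level $\le 1$. Hence each component of $\mathcal{S}(X_x^k)$ for $k\ge 1$ contains at least one minimal point. The components at level $1$ are $\mathcal{S}(U_x)$ together with the isolated vertices given by minimal points not comparable with $x$. Here I use the argument in the proof of Theorem \ref{A}: a point at level $\le 1$ that is comparable with such an $a$ would have to lie below $x$ or be $a$ itself. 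Thus, at level $1$, $b$ sits alone in its component.

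Next, suppose the class of $b$ dies at $n_b<\infty$. Then at level $n_b$, $C_b$ is joined to a component that was distinct at level $n_b-1$ and contains some other minimal point $a\neq b$. Take a path in the $1$-skeleton of $\mathcal{S}(X_x^{n_b})$ from $b$ to $a$. Its edges join comparable pairs, so it is a zigzag $b=z_0,z_1,\dots,z_r=a$ of consecutively comparable elements, all in $X_x^{n_b}$. Look at the first vertex $z_i$ that lies above a minimal point other than $b$. If $i\ge 1$, I claim $z_i$ is comparable with $b$.

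This claim is the main obstacle. The worry is that the path might pass through elements above $b$ and then go down to elements that are not minimal. I would handle it by replacing the zigzag with a path through minimal points and elements above them. Every element $z$ on the path lies above some minimal point, so $z$ is comparable with a minimal point $m(z)\le z$. Consecutive comparable elements $z_j\le z_{j+1}$ share the minimal point $m(z_j)$, which lies below both of them. So the sequence of minimal points $m(z_0)=b, \dots, m(z_r)=a$ changes at some step. At the first change, with $m(z_{j})=b$ and $z_j\le z_{j+1}$ or $z_{j+1}\le z_j$, I choose the element that is above both $b$ and another minimal point. This is the larger of $z_j,z_{j+1}$: it is above $b$ and also above $m(z_{j+1})$, or above $m$ of the smaller one. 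It lies in $X_x^{n_b}$, which proves the first assertion. Choosing $m(z)$ consistently, for instance as a minimal point below $z$ that is $b$ whenever $b\le z$, makes this bookkeeping valid.

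For the "moreover" part, I would argue by contraposition. If no element of $X_x^{k}$ (for any $k$, equivalently for $X$ itself at the final level) is above both $b$ and another minimal point, then every element comparable with $b$ lies above $b$ and above no other minimal point. The same minimal-point labelling shows that no path leaves the set of elements whose only minimal point below them is $b$. So $C_b$ never merges with another component, and $n_b=\infty$.
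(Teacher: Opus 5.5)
Your proof is correct, but it runs in the opposite direction from the paper's. The paper works level by level. When an element $y$ comparable with two minimal points $a_i,a_j$ enters $X_x^t$, the edges $\langle y,a_i\rangle$ and $\langle y,a_j\rangle$ join the two components, so one class dies at $t$ (with a WLOG choice among level-$1$ classes). Iterating, the paper reads off $n_b=\min\{\Psi(x,y)\mid y\in\Theta_b\}$, the formula it later uses for $\Theta_a$ and in Theorem~\ref{E}.

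That argument takes for granted that components can merge \emph{only} through such an element, and this is exactly the implication the first assertion asks for. Your zigzag argument supplies it. A path in the comparability graph of $X_x^{n_b}$ from $b$ to a different minimal point $a$ has to pass from an element above $b$ to one that is not. The larger element of that step lies above $b$ and above a minimal point below the other element. This argument is rigorous and needs no tie-breaking convention. You only use that at the death level the component containing $b$ absorbs a previously separate component, and that component contains a minimal point $a\neq b$ because every vertex at level $\geq 1$ is joined to a minimal point. Together with the trivial converse, it also recovers the paper's formula as the first level at which the component of $b$ meets another minimal point.

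Three small points:
\begin{itemize}
\item The consistent choice of $m(z)$ is unnecessary: the larger of two comparable elements lies above both labels, whatever they are.
\item The ``moreover'' part is just the contrapositive of the first assertion, since $X_x^{n_b}\subseteq X$. Your closure argument is correct but not needed.
\item Your sentence that the class dies at the \emph{first} level where $C_b$ merges with an equally old component fails when two level-$1$ classes merge and the tie is broken in favour of $b$. Nothing later depends on it.
\end{itemize}
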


	\begin{proof}
	
	 Let $\{a_1,a_2,...,a_k\}$ be the collection of minimal points of $X$, which are not comparable to $x$. By Theorem \ref{A}, for each $a_i, 1\leq i\leq k$, we have a class $[1,n_{a_i})$ in $dgm(V_x)$, where $n_{a_i}\in \mathbb{N}\cup \{\infty\}$. Note that, we have a component in $S(X_x^1)$ corresponding to each $a_i$  and  one component containing $x$, $1\leq i\leq k$. So,  we have $k+1$ components at level 1.\par 
		
		Next, we observe that $n_{a_j}= \min \{ t ~|~ y\in X_x^t \text{ such that } y \text{ is comparable with both }$ $a_i \text{ and } a_j,\text{ for	some } 0\leq i< j\leq k\}, 1\leq j\leq k$, where $a_0$ is a minimal point comparable with $x$.\par 
		
		Notice that if a point $y$ enters  $X_x^2$ which is comparable with both $a_i \text{ and } a_j,\text{ for }\\
		 \text{ some } 0\leq i < j\leq k,$ then there will be an edge between $y$ and $a_i$ along with an edge between $y$ and $a_j$ in $\mathcal{S}(X_x^2)$. So, two components of $\mathcal{S}(X_x^1)$ merge into one component of $\mathcal{S}(X_x^2)$ due to the entrance of $y$. If both $i$ and $j$ are non-zero then one class  corresponding to $a_i$ or $a_j$ ends and one persists. So, in this case, WLOG, we may take  $[1,n_{a_j})=[1,2)\in dgm(V_x)$.
		Even if  $i=0$, then by elder rule the class corresponding to $a_0$ will persist and other will end. So, $[1,n_{a_j})=[1,2)\in dgm(V_x)$.
		If no such $y$ enters $X_x^2$, then no class ends at level 2. In this case $[1,2)\notin dgm(V_x)$.\par 
		
		We repeat this process untill a point $y$ appears in $X_x^t$, for some $t\geq 2$, where $y$ is comparable with both $a_i$ and $a_j$, for some $0\leq j \leq k$. So, we can say that the class corresponding to minimal point $a_j, 1\leq j\leq k,$ ends at $n_{a_{j}}$, where $n_{a_{j}}$ is the earliest level $t$ such that $y$  enters $X_x^t$, and $y$ is comparable with both $a_i$ and $a_j$, for some $0\leq i \leq k$.  If no such $y$ enters $X_x^t$, $\forall t\geq 1$, then $n_{a_j}=\infty.$
		\end{proof}
	
	In the above proof, one may notice that $n_{a_{j}}=\min\{\Psi(x,y)~|~ y \in X_x^{\Psi(x,y)} \text{ such that }\\
	 y \text{ is comparable with both $a_j$ and another minimal point of $X$}\}$.\par

	Notice that in Example \ref{example1}, In  $dgm(V_s)$ class started at level 1 corresponding to $t$ ends when $q$ enters in level 3. So, by Theorem \ref{D}, $[1,3)\in dgm(V_s)$. Therefore, $dgm(V_s)=\{[0,\infty),[1,3)\}$. Similarly, by Theorem \ref{A} and \ref{D}, we get  $dgm(V_t)=dgm(V_u)=dgm(V_r)=\{[0,\infty),[1,3)\}$, $dgm(V_p)=\{[0,\infty),[1,2)\}$, and $dgm(V_q)=\{[0,\infty)\}$.
	
	\begin{example}\label{example2}
	Let $X$ be a discrete space with $n$ points. Then $dgm(V_x)$ consists of $n$ classes corresponding to each point. In particular, $[0,\infty)\in dgm(V_x)$ with multiplicity 1 and $[1,\infty)\in dgm(V_x)$ with multiplicity $n-1$.
	\end{example}

	In the following result, we relate the cardinalities of persistence diagram corresponding to comparable elements.
	
	\begin{theorem}\label{K}
	    Let $X$ be a finite $T_0$ space and $x,y\in X$ such that $x\leq y$. Then $|dgm(V_x)|\geq |dgm(V_y)|$.
	\end{theorem}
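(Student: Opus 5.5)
We will reduce the statement to a count of minimal points and then compare two such counts by inclusion. By Remark \ref{F}, $|dgm(V_x)| = m_x + 1$ and $|dgm(V_y)| = m_y + 1$. Here $m_z$ denotes the number of minimal points of $X$ that are not comparable with $z$. So it suffices to show $m_x \geq m_y$. Equivalently, we show that the set $N_y$ of minimal points not comparable with $y$ is contained in the set $N_x$ of minimal points not comparable with $x$.

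First observe that for a minimal point $a$ and any $z\in X$, comparability of $a$ and $z$ means exactly $a\leq z$. Indeed, $z\leq a$ forces $z=a$ by minimality, and then also $a\leq z$. Hence $N_z=\{a \text{ minimal} ~|~ a\not\leq z\}$, i.e.\ the minimal points outside $U_z$. (In furtherness language, by Remark \ref{2.10}, $a\in N_z$ iff $\Psi(z,a)=1$, as already used in the proof of Theorem \ref{A}.)

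Now take $a\in N_y$, so $a\not\leq y$. If $a\leq x$, then $x\leq y$ and transitivity would give $a\leq y$, a contradiction. Hence $a\not\leq x$, so $a\in N_x$. Thus $N_y\subseteq N_x$ and $m_y\leq m_x$, giving $|dgm(V_x)|\geq|dgm(V_y)|$. The same inclusion can be read off as $U_x\subseteq U_y$, so fewer minimal points lie outside $U_y$.

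There is no serious obstacle once Remark \ref{F} is accepted. The only point needing care is the reduction of ``comparable'' to ``$a\leq z$'' for minimal $a$, which is what makes the inclusion monotone in $z$. Without it, one might worry about minimal points above $x$, but there are none other than $x$ itself.
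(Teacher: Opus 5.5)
Your proof is correct and follows essentially the same route as the paper: both rest on the count from Theorem \ref{A} (equivalently Remark \ref{F}) and on the fact that $U_x\subseteq U_y$ forces every minimal point not comparable with $y$ to be not comparable with $x$. The paper phrases this as matching each class $[1,n_a^y)$ of $dgm(V_y)$ with a class $[1,n_a^x)$ of $dgm(V_x)$, while you count via $N_y\subseteq N_x$ and also justify explicitly the reduction of comparability with a minimal point $a$ to $a\leq z$, which the paper leaves implicit.
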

	
	\begin{proof}
	    By Theorem \ref{A}, corresponding to all the  minimal points comparable with $x$, we have just one class $[0,\infty)\in dgm(V_x).$ As $x\leq y$,  minimal points which are comparable with $x$ are also comparable with $y$. So, again by Theorem \ref{A}, $[0,\infty)\in dgm(V_y).$\par 
	    Let $a\in X$ be a minimal point of $X$ that is not comparable with $y$. Thus, $a$ is not comparable with $x$. Again, by Theorem \ref{A}, corresponding to each minimal point $a\in X$ not comparable with $y$, we have a class $[1,n_a^y)\in dgm(V_y)$, and accordingly a class $[1,n_a^x)\in dgm(V_x)$. So, corresponding to each element of $dgm(V_y)$, we have an element of $dgm(V_x)$. Therefore, $|dgm(V_x)|\geq |dgm(V_y)|$.
	\end{proof}
	
	Notice that in above result $dgm(V_x)$ may have some classes corresponding to the minimal point of $X$ which are comparable with $y$ but not comparable with $x$. In Example \ref{example1}, we can see that $p \leq q$ and $t\in X$ is a minimal point comparable with $q$ but not with $p$, and we have  $2=|dgm(V_p)|> |dgm(V_q)|=1$.\par 
	
	From the proof of Theorem \ref{K}, we get that if $x\leq y$ then corresponding to every class $[1,n_a^x)$ in $dgm(V_x)$, we have a class $[1,n_a^y)$ in $dgm(V_y)$. In the next result, we observe a relationship between $n_a^x$ and $n_a^y$.

	\begin{theorem}\label{E}
	    Let $X$ be a finite $T_0$ space and $x,y\in X$ such that $x\leq y$. Let $a$ be minimal point of $X$ such that $a$ is not comparable with $y$ and $[1,n_a^x)$ $\&$ $[1,n_a^y)$ are classes corresponding to $a$  in $dgm(V_x)$ and $dgm(V_y)$, respectively. Then $n_a^y\leq n_a^x$.
	\end{theorem}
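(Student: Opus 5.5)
The plan is to compare the two filtrations directly, using Remark \ref{2.10}. The key observation is that $x\leq y$ gives $U_x\subseteq U_y$, hence for every $z\in X$
\[
\Psi(y,z)=|U_z\setminus U_y|\leq |U_z\setminus U_x|=\Psi(x,z).
\]
Consequently $X_x^t\subseteq X_y^t$ for every $t\geq 0$, and so $\mathcal{S}(X_x^t)\subseteq \mathcal{S}(X_y^t)$. In other words, the filtration attached to $y$ is at every level at least as large as the one attached to $x$. Since $a$ is not comparable with $y$, it is also not comparable with $x$ (otherwise $a\leq x\leq y$). So by Theorem \ref{A} both classes $[1,n_a^x)$ and $[1,n_a^y)$ exist and are born at level $1$.

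If $n_a^x=\infty$ there is nothing to prove, so I will assume $n_a^x<\infty$. I will then use the characterization from the proof of Theorem \ref{D}:
\[
n_a^x=\min\{t \mid \text{some } z\in X_x^t \text{ is comparable with both } a \text{ and another minimal point } a'\},
\]
where the merge producing the death may be with the component of $a_0$ (a minimal point below $x$) or with that of another minimal point. Choose $z$ realizing this minimum, so that $\Psi(x,z)=n_a^x$. By the inequality above, $\Psi(y,z)\leq n_a^x$, so $z\in X_y^{n_a^x}$. The element $z$ is still comparable with $a$ and with $a'$ in the same poset. Hence $\mathcal{S}(X_y^{n_a^x})$ contains the edges $\langle a,z\rangle$ and $\langle a',z\rangle$, and applying Theorem \ref{D} (its quantitative form) to $y$ gives $n_a^y\leq n_a^x$.

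The main obstacle is the elder rule and the choice of which class dies. Theorem \ref{D} only says that merging happens, and at a merge of two components born at level $1$ the choice of which one dies is made ``WLOG''. To make the argument honest, I would phrase it at the level of components rather than labels. Let $C_x$ be the component of $a$ in $\mathcal{S}(X_x^t)$ and $C_y$ the component of $a$ in $\mathcal{S}(X_y^t)$; the inclusion of complexes gives $C_x\subseteq C_y$. I would read $n_a$ as the first level at which the component of $a$ contains a vertex other than those reachable only through $a$, i.e. meets another minimal point; this is exactly what the characterization in Theorem \ref{D} encodes. Under this reading, if $C_x$ meets another minimal point at level $t$, then so does $C_y\supseteq C_x$, which gives $n_a^y\leq n_a^x$ with the paper's convention.

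Finally, I would check the degenerate case where $a'$ is a minimal point that is comparable with $y$ but not with $x$. Then $a'$ sits in the level-$0$ component of $V_y$, and the merge in the $y$-filtration can only occur earlier or at the same level, so the inequality is unaffected.
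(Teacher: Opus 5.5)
Your proposal is correct and follows the paper's proof. From $x\leq y$ you get $U_x\subseteq U_y$, hence $\Psi(y,z)\leq\Psi(x,z)$ for the point $z$ realizing $n_a^x$, and the characterization of $n_a^y$ as the minimum of $\Psi(y,z')$ over points comparable with $a$ and another minimal point (the remark after Theorem \ref{D}) then gives $n_a^y\leq n_a^x$. Your component-level reading of $n_a$ is exactly what that characterization encodes, so you handle the elder-rule ambiguity the same way the paper does; if one wants genuine elder-rule death times, fixing one tie-breaking order on the minimal points makes your inclusion argument $C_x\subseteq C_y$ work as well, since every minimal point older than $a$ in $V_x$ is also older than $a$ in $V_y$.
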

	
	\begin{proof}
	      As $a$ is a minimal point of $X$ not comparable with $y$, we get that $a$ is not comparable with $x$. By Theorem \ref{D}, corresponding to $a$, we get a class $[1,n_a^x)$  in $dgm(V_x)$. If $n_a^x<\infty$, then  a point $z$ enters $X_x^{n_a^x}$ such that $z$ is comparable with $a$ and another minimal point $X$. As $x\leq y$, we have $U_x\subseteq U_y \implies |U_z\backslash U_y| \leq  |U_z\backslash U_x| \implies \Psi(y,z)\leq \Psi(x,z).$ Therefore, if $n_a^x<\infty$, then we  must have $n_a^y<\infty$. We know that $n_a^y= \min\{\Psi(y,z')~|~  z' \in X_y^{\Psi(y,z')}  \text{ such that } z'\text{ is comparable with both $a_j$ and ano-}\\
	      	\text{ther minimal point of $X$}\}$. Hence, $n_a^y\leq n_a^x.$
	     \end{proof}

	In Example \ref{example1}, we can notice that $s\leq p$ and $t$ is a minimal point not comparable with $p$. We have, $2= n_t^p < n_t^s =3.$\par 
	We can generalize above result for arbitrary length of chain in a finite $T_0$ space.
	
	\begin{corollary}
		Let $X$ be a finite $T_0$ space and  $x_1\leq x_2\leq ... \leq x_k$ be a chain of elements of $X$. Let $``a"$ be a minimal point of $X$ such that $a$ is not comparable with $x_k$ and class $[1,n_a^{x_i})$ in $dgm(V_{x_i})$  corresponding to $``a"$ for $1\leq i\leq k$. Then $n_a^{x_1}\geq n_a^{x_2}\geq ... \geq n_a^{x_k}$.
	\end{corollary}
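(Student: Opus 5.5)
The corollary follows from Theorem \ref{E} applied to each consecutive pair of the chain. The hypotheses have to be checked pair by pair, and this is the only real point. Fix $i$ with $1\leq i\leq k-1$. We have $x_i\leq x_{i+1}$, and we need that $a$ is not comparable with $x_{i+1}$.

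First I show that $a$ is not comparable with any $x_j$, $1\leq j\leq k$. Since $a$ is minimal, $x_j\leq a$ would force $x_j=a$, which is a special case of $a\leq x_j$. So $a$ comparable with $x_j$ means $a\leq x_j$. Then transitivity along the chain gives $a\leq x_j\leq x_k$, which contradicts the assumption that $a$ is not comparable with $x_k$. Hence $a$ is incomparable with every $x_j$. By Theorem \ref{A}(ii) and Theorem \ref{D}, each $dgm(V_{x_j})$ therefore contains a class $[1,n_a^{x_j})$ born at level $1$ corresponding to $a$. So all the quantities in the statement are well defined, with values in $\mathbb{N}\cup\{\infty\}$.

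Next, for each $i$ I apply Theorem \ref{E} with $x=x_i$ and $y=x_{i+1}$. Here $x_i\leq x_{i+1}$, and by the previous paragraph $a$ is a minimal point not comparable with $x_{i+1}$. The theorem gives $n_a^{x_{i+1}}\leq n_a^{x_i}$. The value $\infty$ causes no trouble: Theorem \ref{E} already covers it, through the characterization of $n_a$ as a minimum of furtherness values, with $\min\emptyset=\infty$. Chaining these $k-1$ inequalities, by induction on $k$ or directly by transitivity of $\leq$ on $\mathbb{N}\cup\{\infty\}$, yields $n_a^{x_1}\geq n_a^{x_2}\geq\cdots\geq n_a^{x_k}$.

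I expect no genuine obstacle. The only delicate point is the one above: Theorem \ref{E} needs $a$ incomparable with the \emph{larger} element of each pair, and this is supplied by minimality of $a$ together with incomparability with the top element $x_k$.
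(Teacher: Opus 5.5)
Your proposal is correct and is essentially the paper's own argument: the paper states the corollary without a separate proof, as the chain version of Theorem~\ref{E}, which amounts to applying that theorem to each consecutive pair $x_i\leq x_{i+1}$. Your observation that minimality of $a$ and its incomparability with $x_k$ force $a$ to be incomparable with every $x_j$ is the detail needed to justify each of these applications.
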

	
	\section{\textbf{\large{Classification via Persistence Diagrams}}}\label{5}
	
	In this section, we study an equivalence relation on finite $T_0$ spaces, and interpret it in terms of persistence diagrams. Using this relation, we classify elements, analyze the resulting quotient spaces, and examine the behavior of elements in this setting.\par 
	
	In the previous section, we observed that in a finite $T_0$ space X, minimal points give rise to the birth of classes in the persistence diagram associated with any element. We also saw that points comparable with more than one minimal point are responsible for the death of classes.\par 
	
	Let $X$ be a finite $T_0$ space and $x\in X$. We say that $x$ is a  \textit{cone point}, if $x$ is comparable to at least two minimal points and more than one edges are going downward from $x$ in the Hasse diagram of $X$.	In Example \ref{example1}, we have seen that $s$ and $t$ are minimal points and $q$ is a cone point of $X$.\\
	
	 \noindent
	\begin{minipage}{0.55\textwidth}
		 \begin{example}\label{example 4}{\normalfont
		 	Let $Y$ and $Z$ be finite $T_0$ spaces whose poset diagrams are shown on the right. Notice that $d$ is not a cone point of $Y$ even though it is comparable with the two minimal points $a$ and $b$. Similarly, $h$ is not a cone point of $Z$ even though two  edges going downward from it  in the poset diagram.}
		 \end{example}
	\end{minipage}
	\hfill
	\begin{minipage}{0.4\textwidth}
		
			\begin{center}
			\begin{tikzpicture}  
				[scale=.9,auto=center,every node/.style={circle,fill=blue!10}] 
				
				\node (a) at (1.5,0) [ball color=cyan, circle, draw=black, inner sep=0.1cm, label=below:$a$] {};  
				\node (b) at (2.5,0)  [ball color=cyan, circle, draw=black, inner sep=0.1cm, label=below:$b$]{}; 
				\node (c) at (2,1) [ball color=cyan, circle, draw=black, inner sep=0.1cm, label=left :$c$]{};    
				\node (d) at (2,2) [ball color=cyan, circle, draw=black, inner sep=0.1cm, label=above :$d$]{};

				\draw (a) -- (c);
				\draw (b) -- (c); 
				\draw (c) -- (d);

			\node (e) at (6,0) [ball color=cyan, circle, draw=black, inner sep=0.1cm, label=below:$e$] {};  
			\node (f) at (5.5,1)  [ball color=cyan, circle, draw=black, inner sep=0.1cm, label=left:$f$]{}; 
			\node (g) at (6.5,1) [ball color=cyan, circle, draw=black, inner sep=0.1cm, label=right :$g$]{};    
			\node (h) at (6,2) [ball color=cyan, circle, draw=black, inner sep=0.1cm, label=above :$h$]{};

			\draw (e) -- (f);
			\draw (e) -- (g); 
			\draw (f) -- (h);
			\draw (g) -- (h);

		\end{tikzpicture}  
		
		 $Y$ \hspace{3mm} Example \ref{example 4} \hspace{2mm}  $Z$
			\end{center}
	
	\end{minipage}\\

	 Using above terminology with Theorem \ref{D}, we can say that if a point is responsible to end a class in a persistence diagram, then it is a cone point. In the next example, we notice that not all cone points ends a class in persistence diagram.

	 	\begin{center}
	 	\begin{tikzpicture}  
	 		[scale=.9,auto=center,every node/.style={circle,fill=blue!10}] 
	 		
	 		\node (a) at (1,0) [ball color=cyan, circle, draw=black, inner sep=0.1cm, label=below:$a$] {};  
	 		\node (b) at (2,0)  [ball color=cyan, circle, draw=black, inner sep=0.1cm, label=below:$b$]{}; 
	 		\node (c) at (3,0) [ball color=cyan, circle, draw=black, inner sep=0.1cm, label=below :$c$]{};
	 		\node (f) at (1.5,1) [ball color=cyan, circle, draw=black, inner sep=0.1cm, label=left :$f$]{};  
	 		\node (e) at (2.5,1) [ball color=cyan, circle, draw=black, inner sep=0.1cm, label=right :$\!\!e$]{};  
	 		\node (d) at (3.5,1) [ball color=cyan, circle, draw=black, inner sep=0.1cm, label=right :$d$]{};  
	 		\node (g) at (2,2) [ball color=cyan, circle, draw=black, inner sep=0.1cm, label=above :$g$]{};

	 		\draw (a) -- (f);
	 		\draw (b) -- (f); 
	 		\draw (f) -- (g);
	 		\draw (b) -- (e);
	 		\draw (e) -- (g); 
	 		\draw (c) -- (e); 
	 		\draw (c) -- (d); 
	 		\draw (d) -- (g);

	 		\node[ball color=black, circle, draw=black, inner sep=1pt,label=left:$a$] (a) at (8,1) {};
	 		\node[ball color=black, circle, draw=black, inner sep=1pt, label=right:$b$] (b) at (10.5,1.5) {};
	 		\node[ball color=black, circle, draw=black, inner sep=1pt,label=below:$c$] (c) at (9.8,-0.2) {};
	 		\node[ball color=black, circle, draw=black, inner sep=1pt,label=left:$d$] (d) at (8,0.2) {};
	 		\node[ball color=black, circle, draw=black, inner sep=1pt,label=right:$e$] (e) at (11,0.6) {};
	 		\node[ball color=black, circle, draw=black, inner sep=1pt,label=above:$f$] (f) at (9,2) {};
	 		\node[ball color=black, circle, draw=black, inner sep=1pt,label=below left:$g$] (g) at (9.8,0.8) {};

	 		\draw (a) -- (f) -- (g) -- (a);
	 		\draw (b) -- (f) -- (g) -- (b);
	 		\draw (d) -- (c) -- (g) -- (d);
	 		\draw (b) -- (g) -- (e) -- (b);
	 		\draw (c) -- (e) -- (g) -- (c);

	 		\fill[pattern=north east lines, pattern color=black] (8,1) -- (9.8,0.8) -- (9,2) -- cycle;
	 		\fill[pattern=north east lines, pattern color=black] (10.5,1.5) -- (9.8,0.8) -- (9,2) -- cycle;
	 		\fill[pattern=north east lines, pattern color=black] (8,0.2) -- (9.8,0.8) -- (9.8,-0.2) -- cycle;
	 		\fill[pattern=north east lines, pattern color=black] (10.5,1.5) -- (9.8,0.8) -- (11,0.6) -- cycle;
	 		\fill[pattern=north east lines, pattern color=black] (9.8,-0.2) -- (9.8,0.8) -- (11,0.6) -- cycle;
	 		
	 	\end{tikzpicture}  
	 \end{center}
	 \hspace{32mm}	$X$ \hspace{15mm} Example \ref{example3} \hspace{20mm} $\mathcal{S}(X)$\\
	 
	 \begin{example}\label{example3}
	 	{\normalfont
	 		Let $X=\{a,b,c,d,e,f,g\}$ be a finite $T_0$ space associated with a poset, whose Hasse diagram is shown in the above figure. Here, $dgm(V_a)=dgm(V_c)=dgm(V_d)=\{[0,\infty),[1,2),[1,3)\}$,  $dgm(V_b)=\{[0,\infty),[1,2),[1,2)\}$,\\ $dgm(V_e)=dgm(V_f)=\{[0,\infty),[1,2)\}$, and $dgm(V_g)=\{[0,\infty)\}$.
	 	}
	 \end{example}
	 
	 Notice that in above example, there are three cone point $e,f$ and $g$ in $X$. However, no class ends in any persistence diagram $dgm(V_x)$ for any $ x\in X$ due to appearance of $g$ in $X_x^t$. This shows that not all cone points necessarily give rise to the death of a class in the persistence diagram.\par 
	 Theorem \ref{D} states that the appearance of a point that is comparable to (at least) two minimal points is responsible for ending a class in the persistence diagram, provided that no such point comparable to those minimal points has appeared earlier. In Example \ref{example3}, $e$ and $f$ always appear before $g$. Therefore, $g$ is never responsible for ending any class.\par 
	 Next, we determine which cone points are responsible for the death of classes in the persistence diagram.
	 
	 \begin{remark}
	 	{\normalfont 
            Let $X$ be a finite $T_0$ space and $x \in X$. Let $\zeta_X^x$ denote the collection of cone points of $X$ that are responsible for ending a class in $dgm(V_x)$. From the furtherness matrix $\Psi(X)$, recall that if a row corresponding to a point contains exactly one zero, then that point is minimal \cite{Badra}. For a minimal point $a$ not comparable with $x$, there exists a class $[1, n_a^x) \in dgm(V_x)$, which corresponds to $a$. In this case, $\Psi(x,a)=1$. Thus, in the furtherness matrix, the presence of $1$ in the row of $x$ and column corresponding to a minimal point $a$ indicates the existence of a class in the persistence diagram $dgm(V_x)$.\par 
            Now, for each minimal point $a$, define $\Theta_a$ to be the collection of points that are comparable with $a$ as well as with at least one minimal point other than $a$. In other words, elements of $\Theta_a$ are precisely those points of $X$ whose corresponding rows have $0$ in the columns associated with $a$ and at least one minimal point other than $a$. So, for every minimal point $a$, we can obtain $\Theta_a$ from the furtherness matrix of $X$.\par
            If a class  $[1, n_a^x) \in dgm(V_x)$ corresponding to a minimal point  $a$ dies at level $t = n_a^x<\infty$, then  a point from $\Theta_a$ appears in  $X_x^t$. Hence, $\Theta_a$ must be non empty, and    $n_a^x=\min\{\Psi(x,y)~|~ y\in \Theta_a\}.$ And if $z \in \Theta_a$ satisfies $n_a^x=\min\limits_{y\in \Theta_a}\Psi(x,y)=\Psi(x,z)$, then $z\in \zeta^x_{X}$.\par 
              Note that if $\Theta_a\neq \emptyset,$ then it is not necessary  that $n_a^x<\infty$. However, this implication holds whenever X is connected.
	 		}
	 \end{remark}
	 
	 \noindent
	 \begin{minipage}{0.55\textwidth}
	 	 Notice that in Example \ref{example3}, we have three minimal points $a,b$ and $c$. From the furtherness matrix $\Psi(X)$, it is clear that $\Theta_a=\{f,g\}$, $\Theta_b=\{e,f,g\}$ and $\Theta_c= \{e,g\}$.\par 
	 	 And, $\zeta_X^a= \zeta_X^b= \zeta_X^c =\zeta_X^d=\{e,f\}$, $\zeta_X^e=\{f\}$, $\zeta_X^f=\{e\}$, and $\zeta_X^g=\emptyset$.
	 \end{minipage}
	 \hfill
	 \begin{minipage}{0.35\textwidth}
	 $\bordermatrix{& a & b & c & d & e & f & g \cr
	 		a & 0 & 1 & 1 & 2 & 3 & 2 & 6 \cr
	 		b & 1 & 0 & 1 & 2 & 2 & 2 & 6 \cr
	 		c & 1 & 1 & 0 & 1 & 2 & 3 & 6 \cr
	 		d & 1 & 1 & 0 & 0 & 2 & 3 & 5 \cr
	 		e & 1 & 0 & 0 & 1 & 0 & 2 & 4 \cr
	 		f & 0 & 0 & 1 & 2 & 2 & 0 & 4 \cr
	 		g & 0 & 0 & 0 & 0 & 0 & 0 & 0 }$
	 		
	 		\vspace{2mm} 
	 		
	  \hspace{6mm}  $\Psi(X)$ in Example \ref{example3}
	 \end{minipage}\\

	 We now define a relation on finite $T_0$ spaces using their minimal and cone points. 
	 \begin{remark}\label{M}{\normalfont
	 	Let $X$ be a finite $T_0$ space and $x,y\in X$. We define a relation on $X$ by setting $x\sim y$ if and only if the following conditions hold:
	 	\begin{itemize}
	 	\item[i.]  The number of minimal points of X that are not comparable to $x$ is equal to the number of minimal points that are not comparable to $y$.
	 	\item[ii.] The multisets $\{\Psi(x,t)~|~  t\in \zeta_X^x\}$ and  $\{\Psi(y,t)~|~  t\in \zeta_X^y\}$ are equal.
	 	\end{itemize} 
	 	
	 }
	 \end{remark}
	 
	 It is easy to verify that the relation defined in Remark \ref{M} is an equivalence relation on $X$. Thus, this relation partitions the space into equivalence classes.\par

	In Example \ref{example1}, we have $r\sim s\sim t\sim u$, in Example \ref{example3}, $a\sim c\sim d$ and $e\sim f$, while in Example \ref{example2}, each point of the space is related to every other point.\par 
	 The following result connects the equivalence relation defined in Remark \ref{M} with the persistence diagrams of the associated elements. 
   
   \begin{theorem}\label{M'}
    Let $X$ be a finite $T_0$ space and $\sim$ be a relation as defined in Remark \ref{M}. Then for $x,y\in X$, we have $x\sim y$ if and only if $dgm(V_x)=dgm(V_y)$.
   \end{theorem}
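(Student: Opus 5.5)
The plan is to show that $dgm(V_x)$ is completely determined by two pieces of data: the number of minimal points not comparable with $x$, and the multiset of finite death times. I will then identify these two pieces of data with conditions (i) and (ii) of Remark \ref{M}.

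\emph{Normal form of the diagram.} Combining Theorem \ref{A} and Theorem \ref{D}, I will write
$$dgm(V_x)=\{[0,\infty)\}\cup\{[1,n_a^x)\mid a \text{ minimal, not comparable with } x\},$$
with the second part taken as a multiset. By Remark \ref{F}, this diagram has exactly $m_x+1$ points, where $m_x$ is the number of minimal points not comparable with $x$. Every off-diagonal point other than $[0,\infty)$ has birth $1$. Hence two such diagrams coincide if and only if they have the same number of points and the same multiset of death values in $\mathbb{N}\cup\{\infty\}$. The infinite deaths number $m_x$ minus the number of finite deaths. So equality of diagrams is equivalent to two conditions: $m_x=m_y$, and the multisets $D_x=\{n_a^x\mid n_a^x<\infty\}$ and $D_y$ agree.

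\emph{Identifying finite deaths with cone points.} Next I use the remark preceding Remark \ref{M}, together with the proof of Theorem \ref{D}. Whenever a class $[1,n_a^x)$ dies at a finite level, there is $z\in\Theta_a$ with $n_a^x=\min_{y\in\Theta_a}\Psi(x,y)=\Psi(x,z)$, and this $z$ lies in $\zeta_X^x$. Conversely, every $t\in\zeta_X^x$ by definition ends some class, at level $\Psi(x,t)$, since $t$ enters exactly at $X_x^{\Psi(x,t)}$. This gives a surjective correspondence from finite-death classes to $\zeta_X^x$, sending a death time $n$ to a cone point $t$ with $\Psi(x,t)=n$. Once this correspondence is multiplicity-preserving, the multiset $\{\Psi(x,t)\mid t\in\zeta_X^x\}$ equals $D_x$. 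With that in hand, both directions of the theorem follow at once. If $x\sim y$, then (i) gives $m_x=m_y$ and (ii) gives $D_x=D_y$, so the diagrams agree. If $dgm(V_x)=dgm(V_y)$, then counting points gives (i), and comparing the finite deaths gives (ii).

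\emph{Main obstacle.} The delicate step is the multiplicity bookkeeping in the correspondence. A single cone point $t$ entering at level $k$ may merge $r+1$ components of $\mathcal{S}(X_x^{k-1})$ and thereby kill $r$ classes. Also, two different cone points entering at the same level may both be candidates for killing the same class. To handle this, I will fix, level by level and following the elder rule as in the proof of Theorem \ref{D}, a choice of killer $z\in\zeta_X^x$ for each finite-death class. I will then read $\{\Psi(x,t)\mid t\in\zeta_X^x\}$ as the multiset in which each $t$ is counted once for each class it ends. This is the interpretation implicit in the description of $\zeta_X^x$ as the points \emph{responsible} for endings. The key check is that the number of classes dying at level $k$ equals the drop in the number of components from level $k-1$ to level $k$, and that this drop equals the total number of endings attributed to cone points entering at level $k$. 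Both facts hold by Theorem \ref{A}(iii), since no new components are born after level $1$. Therefore the multiset of levels $\Psi(x,t)$, counted in this way, reproduces $D_x$ exactly, independently of the choices made.
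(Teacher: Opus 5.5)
Your route is the paper's route. Remark~\ref{F} fixes the number of points, every point other than $[0,\infty)$ is born at level $1$, and finite death times are matched with the values $\Psi(x,t)$, $t\in\zeta_X^x$. What you add is the multiplicity bookkeeping, and that is exactly where the paper's proof is thin. It only shows that every finite death value occurring in $dgm(V_x)$ also occurs in $dgm(V_y)$, which is a set-level containment, and then asserts equality of multisets.

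Your reading of condition (ii), with each $t\in\zeta_X^x$ counted once per class it ends, is not merely ``implicit''---it is needed. With one entry per element of $\zeta_X^x$ the statement is false. Take minimal points $a,b,c$, elements $p,r$ covering $a$, $q$ covering $b$, $s$ covering $c$, and $g$ covering $p,b,c$, $h_1$ covering $q,r$, $h_2$ covering $q,s$. Then $dgm(V_p)=dgm(V_q)=\{[0,\infty),[1,3),[1,3)\}$, and each of $p,q$ is incomparable with exactly two minimal points. However, $\zeta_X^p=\{g\}$ with $\Psi(p,g)=3$, since one cone point kills both classes, whereas $\zeta_X^q=\{h_1,h_2\}$ with $\Psi(q,h_1)=\Psi(q,h_2)=3$. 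So the literal multisets are $\{3\}$ and $\{3,3\}$. With your weighting, (ii) says precisely that the multisets of finite death times agree, and the theorem then follows from your normal form. You should present this as a correction of Remark~\ref{M} rather than as an interpretation of it.

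Separately, do not use the formula $n_a^x=\min_{y\in\Theta_a}\Psi(x,y)$ imported from the remark preceding Remark~\ref{M}; it is false. Take $X=\{x,a,b,g,h\}$ with $x,a,b$ minimal, $g$ covering $a,b$, and $h$ covering $x,g$. Then $\Theta_a=\Theta_b=\{g,h\}$ with minimum $\Psi(x,g)=3$, yet $dgm(V_x)=\{[0,\infty),[1,3),[1,4)\}$: the class surviving the merger at level $3$ dies only when $h$ enters.

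Likewise, your ``surjective correspondence'' onto $\zeta_X^x$ fails when two cone points enter at the same level and join the same two components. Under the paper's membership rule both land in $\zeta_X^x$ while only one class dies; your weighting assigns one of them weight $0$. Neither claim is needed.

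What actually carries the proof is your level-by-level count. For $k\geq 2$, the elements entering at level $k$ form an antichain whose strict down-sets are already present. Hence the number of deaths at level $k$ equals the drop in the number of components. Each merge is produced by an entering element whose strict down-set meets two components; such an element is necessarily a cone point $t$ with $\Psi(x,t)=k$.
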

   
   \begin{proof}
      First, assume that $x\sim y$. By Remark \ref{F}, $|dgm(V_x)|=|dgm(V_y)|$. We have  $\{\Psi(x,t)~|~  t\in \zeta_X^x\} = \{\Psi(y,t)~|~  t\in \zeta_X^y\}$ as multisets. If $[1,n_a^x)\in dgm(V_x)$ such that $n_a^x<\infty$, where $a$ is a minimal point not comparable with $x$, then $n_a^x= \Psi(x,t)$, for some $t\in \zeta_X^x$.  Therefore, $n_a^x\in \{\Psi(x,t)~|~  t\in \zeta_X^x\} = \{\Psi(y,t)~|~  t\in \zeta_X^y\}$. Hence $n_a^x= \Psi(y,t') =n_b^y$, for some $t'\in  \zeta_X^y$ and some  minimal point $b$ not comparable with $y$. Consequently, $[1,n_a^x)= [1,n_b^y)\in dgm(V_y)$. Thus, every class of finite persistence in $dgm(V_x)$ is also present in $dgm(V_y)$. Note that the classes of infinite persistence in a persistence diagram are corresponding to the connected components of $\mathcal{S}(X)$. So, $dgm(V_x)\subseteq dgm(V_y)$. Similarly, we get $dgm(V_y)\subseteq dgm(V_x)$. Hence, $dgm(V_x) =  dgm(V_y)$.\par
      Now, let $dgm(V_x)=dgm(V_y)$. Again, by Remark \ref{F}, $x$ and $y$ have the same number of minmial points in $X$ that are  not comparable with them. Consider $t\in \zeta_X^x$, then $\Psi(x,t)= n_a^x$, for some minimal point $a$ not comparable with $x$. Thus, $[1, n_a^x)\in dgm(V_x) =dgm (V_y) \iff [1, n_a^x)=[1, n_b^y),$ for some minimal point $b$ not  comparable with $y$. Therefore, $\Psi(x,t)= n_a^x= n_a^y =\Psi(y,t')$, for some $t'\in \zeta_X^y$. Hence, The multisets $\{\Psi(x,t)~|~  t\in \zeta_X^x\}$ and  $\{\Psi(y,t)~|~  t\in \zeta_X^y\}$ are equal. Therefore, $x\sim y$.
       \end{proof}
	 
	 The above result shows that if $x\sim y$ in a finite $T_0$ space $X$ then their topological evolution within the space is same with respect to the furtherness.\par 
	 We now study the quotient space induced by this relation. Each equivalence class consists of elements of $X$ having the same persistence diagram. Hence, the quotient space is the collection of all such equivalence classes in which the topological evolutions of $\mathcal{S}(X)$ from its elements are same. That means, the quotient space is the collection of all possible topological evolutions of $\mathcal{S}(X)$ from its elements.\par 
	 
	For  a finite $T_0$ space $X$, its quotient space $X/$$\sim$ can be viewed as a preordered set. Using Proposition 2.18 of \cite{Badra} and the continuity of the quotient map $\pi$ from $X$ to $X/$$\sim$, we get 
	 
	 \begin{theorem}\label{KH}
	     Let $X$ be finite $T_0$ space and $x,y\in X$. Then $\Psi_X(x,y)=0 \implies \Psi_{X/\sim}([x],[y])=0.$
	 \end{theorem}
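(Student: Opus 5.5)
The plan is to read off the statement from two facts: the order of a finite space is recovered from its furtherness, and the quotient map $\pi:X\to X/\sim$ is continuous.

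First, in the $T_0$ space $X$, Remark \ref{2.10} gives $\Psi_X(x,y)=|U_y\backslash U_x|$. Hence $\Psi_X(x,y)=0$ holds exactly when $U_y\subseteq U_x$, which is the same as $y\in U_x$, i.e.\ $y\leq x$. So the hypothesis becomes the order relation $y\leq x$ in $X$.

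Next, give $X/\sim$ the quotient topology, so that $\pi$ is continuous. Continuous maps between finite spaces are exactly the order-preserving maps of the associated preorders. Therefore $y\leq x$ gives $\pi(y)\leq\pi(x)$, that is, $[y]\leq[x]$ in the preorder of $X/\sim$; equivalently $[y]\in U_{[x]}$. To see this directly: every open set of $X/\sim$ containing $[x]$ has open preimage containing $x$, hence containing $U_x\ni y$, so it contains $[y]$.

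Finally, the quotient need not be $T_0$, so Remark \ref{2.10} cannot be applied there. Instead I would use the general fact from \cite{Badra} (Proposition 2.18) that in any finite space $\Psi(u,v)=0$ if and only if $v\in U_u$. This is also immediate from the definition of furtherness: $\Psi(u,v)=0$ means $v$ lies in $U_0=U_u$ for some nested sequence around $u$, and every such sequence starts at $U_0=U_u$. Applying this with $u=[x]$ and $v=[y]$ gives $\Psi_{X/\sim}([x],[y])=0$.

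The main obstacle is only to avoid applying the $T_0$ formula of Remark \ref{2.10} to $X/\sim$, which may fail to be $T_0$. Using the definition of $\Psi$, or Proposition 2.18 of \cite{Badra}, for general finite spaces handles this.
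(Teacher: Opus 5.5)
Your proof is correct and follows the paper's route. The paper combines the continuity of the quotient map $\pi$ with Proposition 2.18 of \cite{Badra}, used there as the criterion that continuous maps between finite spaces carry zero furtherness to zero furtherness, and you have unpacked exactly that criterion from the definitions via $\Psi(u,v)=0 \iff v\in U_u$. Your attribution of this last fact to Proposition 2.18 may not match what that proposition actually states, but nothing depends on it, since you also derive it directly from the definition of furtherness.
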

	 
	 It is equivalent to say that  $U_x\subset U_y$ implies $U_{[x]}\subset U_{[y]}$. Converse of Theorem \ref{KH} is not always true. In Example \ref{example1}, $ \Psi_{X/\sim}([q],[u])=0$ but $\Psi_X(q,u)\neq 0$.\par 
	 If $y\in U_x $, then $\Psi(y,x)=0 \implies \Psi_{X/\sim}([y],[x])=0$.  Thus, $\pi(y)=[y]\in U_{[x]}.$ So, we get

	 \begin{corollary}\label{G}
	 	Let $X$ be finite $T_0$ space and $x\in X$. Then $\pi(U_x)\subseteq U_{[x]}$, where $\pi:X\rightarrow X/$$\sim$ is the quotient map.
	 \end{corollary}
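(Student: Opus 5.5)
The plan is to unwind the definitions of the minimal basic open sets in the two preorders and then apply Theorem \ref{KH} pointwise. In $X$, the minimal open set of $x$ is $U_x=\{y\in X \mid y\leq x\}$. In the quotient $X/$$\sim$, regarded (as in the discussion preceding Theorem \ref{KH}) as a preordered set, the minimal open set of $[x]$ is $U_{[x]}=\{[y]\in X/\!\sim \mid [y]\leq [x]\}$. The inclusion $\pi(U_x)\subseteq U_{[x]}$ is therefore a statement about individual elements: for every $y\in U_x$ we must show $[y]\leq [x]$ in $X/$$\sim$.

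I will translate order relations into furtherness. By Remark \ref{2.10}, $\Psi_X(y,x)=|U_x\setminus U_y|$. If $y\in U_x$, then $y\leq x$, so $U_y\subseteq U_x$. Hence $U_x\setminus U_y$ is nonempty when $y\neq x$, and the value $\Psi_X(y,x)$ alone does not settle the question; the correct translation must be identified first.

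Recall that $\Psi(a,b)$ is the least $k$ such that $b$ lies in the $k$-th set of a nested sequence starting at $U_a$. Thus $\Psi(a,b)=0$ iff $b\in U_a$, i.e. iff $b\leq a$. So for $y\in U_x$ we have $y\leq x$, and the zero of furtherness is $\Psi_X(x,y)=0$. (This is consistent with Remark \ref{2.10}: $|U_y\setminus U_x|=0$ exactly when $U_y\subseteq U_x$.) I will use the convention that is consistent with Remark \ref{2.10}, so that $y\in U_x\iff \Psi_X(x,y)=0$.

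Applying Theorem \ref{KH} then gives $\Psi_{X/\sim}([x],[y])=0$. The same characterization of zero furtherness holds in the finite space $X/$$\sim$, because it follows directly from the definition of furtherness via nested sequences, and that definition makes sense for arbitrary finite spaces. It therefore yields $[y]\in U_{[x]}$, i.e. $\pi(y)\in U_{[x]}$. Since $y\in U_x$ was arbitrary, we obtain $\pi(U_x)\subseteq U_{[x]}$.

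As a sanity check independent of the furtherness bookkeeping, I would also note that $\pi$ is continuous. Then $\pi^{-1}(U_{[x]})$ is an open set containing $x$, hence contains the minimal open set $U_x$, which gives the same inclusion directly. The only real obstacle is keeping the argument order in $\Psi$ straight, since the sentence preceding the corollary writes $\Psi(y,x)$. I would resolve this by checking the convention against Remark \ref{2.10} as described above, and I would present the continuity argument as the primary proof if any ambiguity remains.
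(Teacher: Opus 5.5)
Your proposal is correct and follows the paper's route. You apply Theorem \ref{KH} to each $y\in U_x$ and read off $[y]\in U_{[x]}$. Your ordering $\Psi_X(x,y)=0\Rightarrow\Psi_{X/\sim}([x],[y])=0\Rightarrow[y]\in U_{[x]}$ is the one that matches the definition and Remark \ref{2.10}. The paper's one-line argument writes $\Psi(y,x)$ and $\Psi_{X/\sim}([y],[x])$, i.e.\ it uses the reversed convention consistently, so its conclusion is unaffected.

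Your continuity argument is the same fact without the furtherness bookkeeping; the paper itself derives Theorem \ref{KH} from the continuity of $\pi$. It is indeed the cleaner primary proof.
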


	 From Theorem \ref{A}, we can notice that if a finite $T_0$ space has only one minimal point then every element  of the space has the same persistence diagram. So, we have
	 
	 \begin{corollary}\label{C}
	 	Let $X$ be a finite $T_0$ space having just one minimal point and $\sim$ be an equivalence relation as defined in Remark \ref{M}. Then the quotient space $X/$$\sim$ is trivial.
	 \end{corollary}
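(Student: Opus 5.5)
Since $X$ has a unique minimal point $m$, the plan is to show that every $x\in X$ has the same persistence diagram, namely $dgm(V_x)=\{[0,\infty)\}$. Theorem \ref{M'} then identifies all elements under $\sim$, so $X/$$\sim$ is a single point.

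First, $m\leq x$ for every $x\in X$. Given $x$, choose a minimal element below $x$; such an element exists because $X$ is finite and the order is a partial order. By uniqueness this element is $m$. Hence no element $x$ has a minimal point of $X$ that is not comparable with it, so the number $m_x$ in Theorem \ref{A}(ii) is $0$ for every $x$. By Remark \ref{F}, $|dgm(V_x)|=1$. Theorem \ref{A}(i) says the single class is $[0,\infty)$ with multiplicity one, and Theorem \ref{A}(iii) guarantees no class is born after level 1. Therefore $dgm(V_x)=\{[0,\infty)\}$ for all $x\in X$.

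One could instead verify the two conditions of Remark \ref{M} directly. Condition (i) holds because both counts are $0$. For condition (ii), note that $\zeta_X^x$ consists of cone points ending a class born from a minimal point incomparable with $x$. There are no such classes, so $\zeta_X^x=\emptyset$ and both multisets are empty. Either route gives $x\sim y$ for all $x,y\in X$. Hence there is exactly one equivalence class, and $X/$$\sim$ is the one-point (trivial) space.

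The only step needing care is the existence of a minimal element below each $x$, which is where finiteness is used. One takes a maximal descending chain starting at $x$; it terminates at a minimal element. Everything else is a direct reading of Theorem \ref{A} and Remark \ref{F}.
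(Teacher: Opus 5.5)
Your proposal is correct and follows the paper's argument. The unique minimal point lies below every element, so by Theorem \ref{A} and Remark \ref{F} every $dgm(V_x)$ equals $\{[0,\infty)\}$, and Theorem \ref{M'} then puts all of $X$ into a single equivalence class. Your direct check of the conditions in Remark \ref{M} is also valid; in fact $X$ has no cone points at all, so each $\zeta_X^x$ is empty.
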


	 In the next result, we show that if a chain in a finite $T_0$ space is attached to the rest of the space by a single point, then every element of the chain is related to that point.
	 
	 \begin{theorem}
	 	Let $X$ be a finite $T_0$ space and $a\leq x_1\leq\cdots \leq x_k$ be a chain of elements of $X$ such that $x_i$ is not comparable with any other point of $X$ outside this chain. Then $x_i\sim a, ~ 1\leq i\leq k$.  
	 \end{theorem}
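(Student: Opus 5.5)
The plan is to compare the two filtrations $\{\mathcal{S}(X_{x_i}^k)\}_k$ and $\{\mathcal{S}(X_a^k)\}_k$ directly, show that they induce isomorphic $0$-dimensional persistence modules, and then conclude $x_i\sim a$ from Theorem \ref{M'}. Write $C=\{x_1,\dots,x_k\}$. I read the hypothesis as saying that every point of $X\setminus(C\cup\{a\})$ comparable with some $x_j$ is already comparable with $a$, i.e.\ lies in $U_a$; points above $x_j$ outside the chain are excluded. This is the only sensible reading, since anything below $a$ is automatically below $x_j$.

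The first step is to compute the furtherness from $x_i$ in terms of furtherness from $a$. Under this reading $U_{x_i}=U_a\cup\{x_1,\dots,x_i\}$. For $z\notin C$, the set $U_z$ contains no $x_j$, since otherwise $z$ would be a point outside the chain above $x_j$. Hence, by Remark \ref{2.10},
$$\Psi(x_i,z)=|U_z\setminus U_{x_i}|=|U_z\setminus U_a|=\Psi(a,z).$$
Therefore
$$X_{x_i}^k\setminus C=X_a^k\setminus C\quad\text{for every }k\geq 0.$$
The two filtrations differ only in the levels at which the chain points $x_j$ enter. For $x_i$ every $x_j$ with $j\le i$ is already present at level $0$, while for $a$ the point $x_j$ enters at level $\Psi(a,x_j)=j$.

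The second step is to show that the chain points never change the connected components. Every $x_j$ is comparable with $a$, and $a\in U_a\subseteq X^0$ in both filtrations. So whenever $x_j$ is present, it lies in the component of $a$, which is the component born at level $0$ carrying the class $[0,\infty)$ of Theorem \ref{A}. Moreover, any other neighbour of $x_j$ in $\mathcal{S}(X)$ is either a chain point or lies in $U_a$, and $U_a$ is contained in the level-$0$ component of both filtrations. Consequently, at each level $k$ the inclusions
$$\mathcal{S}(X_{a}^k\setminus C)\cup\mathcal{S}(U_a)\hookrightarrow \mathcal{S}(X_a^k),\qquad \mathcal{S}(X_{x_i}^k\setminus C)\cup \mathcal{S}(U_a)\hookrightarrow \mathcal{S}(X_{x_i}^k)$$
induce bijections on $\pi_0$, and the two domains coincide. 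These bijections commute with the maps induced by $k\le k'$, so $V_{x_i}\cong V_a$ as persistence modules and $dgm(V_{x_i})=dgm(V_a)$. Theorem \ref{M'} then gives $x_i\sim a$.

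As a cross-check one can verify the two conditions of Remark \ref{M} directly. The minimal points comparable with $x_i$ are exactly those comparable with $a$, which gives condition (i). For condition (ii), each set $\Theta_b$ with $b$ minimal and incomparable to $a$ avoids $C$, because the chain points are comparable only with minimal points below $a$. Hence $n_b^{x_i}=\min_{y\in\Theta_b}\Psi(x_i,y)=\min_{y\in\Theta_b}\Psi(a,y)=n_b^a$.

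The step I expect to be the main obstacle is the second one: making precise that adding the chain vertices at different levels does not alter $H_0$ naturally in $k$. The point to check carefully is that any vertex adjacent to some $x_j$ lies in $U_a$, which is present and connected from level $0$ in both filtrations. This is exactly where the interpretation of the hypothesis is used.
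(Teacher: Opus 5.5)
Your argument is correct. Its first step is the paper's key computation: the paper also uses $U_{x_i}=U_a\cup\{x_1,\dots,x_i\}$ and the fact that the chain points do not lie in $U_y$ to get $\Psi(x_i,y)=\Psi(a,y)$.

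You diverge in how you get from there to $x_i\sim a$. The paper works directly with the two conditions of Remark~\ref{M}. It states (i), proves the furtherness identity only for cone points, and then simply asserts $\zeta_X^{x_i}=\zeta_X^a$, from which (ii) is immediate. You instead prove the identity for every $z\notin C$. Your $\pi_0$ argument then shows that the chain vertices never create or merge components, which gives $V_{x_i}\cong V_a$. Only at that point do you pass to $\sim$ through Theorem~\ref{M'}.

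Your route supplies exactly the justification the paper leaves implicit: the two filtrations have the same component dynamics on the non-chain vertices, so the same cone points end the same classes at the same levels. It also gives a statement at the level of persistence modules. The paper's route is shorter and does not need Theorem~\ref{M'}. You could combine the two. Your $\pi_0$ argument already shows $\zeta_X^{x_i}=\zeta_X^a$, and these cone points lie outside $C$, so your identity $\Psi(x_i,t)=\Psi(a,t)$ gives (ii) directly. Together with your check of (i), you can then conclude from Remark~\ref{M} without Theorem~\ref{M'}.

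Two smaller remarks. First, the literal hypothesis is perfectly sensible; it just forces $U_a=\{a\}$. Your weaker reading contains it, so you actually prove a slightly more general statement. Your uniform treatment of all $z\notin C$ also covers cone points lying in $U_a$, where the paper's phrase ``$x_i$ is not comparable with $y$'' would not apply.

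Second, do not rely on your cross-check through $n_b^x=\min_{y\in\Theta_b}\Psi(x,y)$. That formula can fail in general. If two components born at level $1$ merge with each other before meeting the component of $x$, the elder rule lets one of their classes outlive that minimum. Your main argument does not use it.
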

	    
	 \begin{proof}
	 Note that the number of minimal points not comparable with $x_i$ is same as number of minimal points not comparable with $a$. Next, we observe that for every cone point $y\in X, \Psi(x_i,y)=\Psi(a,y)$. Since $x_i$ is not comparable with $y$, we have $x_i\notin U_y$. So, we get $U_y\backslash U_{x_i}= U_y\backslash (U_{a} \cup \{x_1,x_2,\cdots, x_i\})$. Therefore, $|U_y\backslash U_{x_i}| = |U_y\backslash U_{a}|$. This gives  $\Psi(x_i,y)=\Psi(a,y).$ Hence, $\zeta_X^{x_i}=\zeta_X^a$. Thus the multisets $\{\Psi(x_i,y)~|~ y\in \zeta_X^{x_i}\}$ and $\{\Psi(a,y)~|~ y\in\zeta_X^a \}$ are equal. Therefore, $x_i\sim a, ~\forall~ 1\leq i\leq k$.  
	 \end{proof}
	 
	 	The following result provides insight into the structure of the equivalence classes induced by the relation defined in Remark \ref{M}.

\begin{theorem}\label{CH}
	Let $X$ be a finite $T_0$ space and $x,y,z\in X$, such that $x\leq y\leq z$ and $x\sim z$. Then $x\sim y\sim z$.
\end{theorem}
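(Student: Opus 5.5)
By Theorem \ref{M'}, it suffices to show $dgm(V_y)=dgm(V_x)$, given $dgm(V_x)=dgm(V_z)$. We compare the diagrams class by class, using the monotonicity results of Section \ref{4} along the chain $x\leq y\leq z$.

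\emph{Step 1: counting classes.} Any minimal point comparable with $x$ is below $x$, hence below $y$ and $z$. So the sets $N_w$ of minimal points not comparable with $w$ satisfy $N_z\subseteq N_y\subseteq N_x$. The hypothesis $x\sim z$ gives $|N_x|=|N_z|$, so all three sets coincide. By Remark \ref{F}, $|dgm(V_x)|=|dgm(V_y)|=|dgm(V_z)|$, which is condition (i) of Remark \ref{M} for $y$. Moreover, the finite-birth classes in all three diagrams are indexed by the same set $N:=N_x=N_y=N_z$ of minimal points.

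\emph{Step 2: comparing death times.} For each $a\in N$, the corollary following Theorem \ref{E}, applied to the chain $x\leq y\leq z$, gives
\[ n_a^x\geq n_a^y\geq n_a^z. \]
All three diagrams contain $[0,\infty)$ exactly once, and every other class has the form $[1,n_a^w)$ with $a\in N$. The hypothesis $dgm(V_x)=dgm(V_z)$ says that the multisets $\{n_a^x\}_{a\in N}$ and $\{n_a^z\}_{a\in N}$ are equal. Combined with $n_a^x\geq n_a^z$ for every $a$, this forces $n_a^x=n_a^z$ for all $a$. To see this, note that $\sum_a n_a^x=\sum_a n_a^z$ when all values are finite, and equality of the sums together with termwise inequality gives termwise equality. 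When $\infty$ values occur, first observe that the number of infinite entries is the same in both multisets. Since $n_a^x\geq n_a^z$, every $a$ with $n_a^z=\infty$ also has $n_a^x=\infty$, so the two sets of indices with infinite death coincide. Then apply the sum argument to the remaining finite values. The sandwich $n_a^x\geq n_a^y\geq n_a^z$ now yields $n_a^y=n_a^x$ for every $a\in N$, hence $dgm(V_y)=dgm(V_x)=dgm(V_z)$. Theorem \ref{M'} then gives $x\sim y\sim z$.

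\emph{Main obstacle.} The delicate point is that equality of multisets does not by itself match the class of $a$ in $dgm(V_x)$ with the class of the same $a$ in $dgm(V_z)$. The argument needs the termwise inequality indexed by the same minimal point, which is why Step 1 must establish $N_x=N_y=N_z$ first. This is also where the infinite death times need care. I would handle them by separating infinite and finite entries as described, or alternatively by a sorted-sequence argument: if $u_a\geq v_a$ termwise and the multisets agree, then comparing the largest values inductively gives equality.
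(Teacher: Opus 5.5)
Your proof is correct and takes essentially the same route as the paper. You identify the three sets of minimal points not comparable with $x$, $y$, $z$ via $N_z\subseteq N_y\subseteq N_x$ and equal cardinalities, then sandwich $n_a^y$ between $n_a^x$ and $n_a^z$ using Theorem~\ref{E} against the multiset equality coming from $x\sim z$. Your version is more careful than the paper's at the delicate step: you justify why termwise inequality plus multiset equality forces termwise equality, where the paper just writes ``Consequently''. You also state the inequalities in the direction Theorem~\ref{E} actually gives, $n_a^x\geq n_a^y\geq n_a^z$, whereas the paper's displayed chain is reversed.
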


\begin{proof}
	As $x\sim z$, we get $|dgm(V_x)|=|dgm(V_z)|$, and  as multisets $\{\Psi(x,t)~|~ t\in \zeta_X^{x}\} = \{\Psi(z,t)~|~ t\in\zeta_X^z \}$. As $x\leq y\leq z$, by Theorem \ref{K}, $|dgm(V_z)|\leq |dgm(V_y)|\leq |dgm(V_x)|$. Therefore, $|dgm(V_z)| = |dgm(V_y)|= |dgm(V_x)|$.\par 
	Let $\mathcal{A}_x, \mathcal{A}_y$ and $\mathcal{A}_z$ be the collections of minimal points of $X$ not comparable with $x,y$ and $z$, respectively. So, $|\mathcal{A}_x|=|\mathcal{A}_y|=|\mathcal{A}_z|$. Clearly,  $\mathcal{A}_z\subseteq \mathcal{A}_y \subseteq \mathcal{A}_x$. This gives that $\mathcal{A}_z= \mathcal{A}_y =\mathcal{A}_x$. We know that $\zeta_X^x$ is the collection of those cone points of $X$, which are responsible to end a class in $dgm(V_x)$. For some $a\in \mathcal{A}_x$, if $n_a^x<\infty$, then $n_a^x\in \{\Psi(x,t)~|~ t\in \zeta_X^x\}$ and $[1,n_a^x)\in dgm(V_x)$. Therefore, $\{n_a^x~|~ a\in \mathcal{A}_x\}= \{\Psi(x,t)~|~ t\in \zeta_X^x\} ~\cup~ \{\infty$ with multiplicity $k-1\}$, where $k$ is the number of connected components of $\mathcal{S}(X)$. As $x\sim z$, $\{\Psi(x,t)~|~ t\in \zeta_X^x\}=\{\Psi(z,t)~|~ t\in \zeta_X^z\} \implies \{n_a^x~|~ a\in \mathcal{A}_x\}=\{n_a^z~|~ a\in \mathcal{A}_z\}$. By Theorem \ref{E},  for every $a\in \mathcal{A}_z$, we have $n_a^x\leq n_a^y\leq n_a^z$.  Consequently, $\{n_a^x~|~ a\in \mathcal{A}_x\}=\{n_a^y~|~ a\in \mathcal{A}_y\}=\{n_a^z~|~ a\in \mathcal{A}_z\}$. Hence, $\{\Psi(x,t)~|~ t\in \zeta_X^x\}=\{\Psi(y,t)~|~ t\in \zeta_X^y\}=\{\Psi(z,t)~|~ t\in \zeta_X^z\}$. Thus, $x\sim y\sim z$.
\end{proof}

Next, we  generalize above result for arbitrary length of a chain in finite $T_0$ space.

\begin{corollary}\label{CHH}
       Let $X$ be a finite $T_0$ space and  $x_1\leq x_2\leq \cdots \leq x_k$ be a chain of elements of $X$ such that $x_1\sim x_k$. Then $x_1\sim x_2\sim \cdots \sim x_k$.
\end{corollary}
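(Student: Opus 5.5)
The corollary follows from Theorem \ref{CH} by applying it once to each interior element of the chain. Fix an index $i$ with $1<i<k$. Since $x_1\leq x_2\leq\cdots\leq x_k$ is a chain and $\leq$ is transitive, we have $x_1\leq x_i\leq x_k$. By hypothesis $x_1\sim x_k$, so Theorem \ref{CH}, applied with $x=x_1$, $y=x_i$ and $z=x_k$, gives $x_1\sim x_i\sim x_k$. The endpoint cases $i=1$ and $i=k$ are covered by reflexivity and by the hypothesis itself.

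Every $x_i$ is therefore related to $x_1$. The relation of Remark \ref{M} is an equivalence relation, which by Theorem \ref{M'} amounts to the statement that $dgm(V_{x_i})=dgm(V_{x_1})$ for all $i$. By symmetry and transitivity, $x_i\sim x_j$ for all $1\leq i,j\leq k$, which is the chain of relations $x_1\sim x_2\sim\cdots\sim x_k$.

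Essentially no obstacle remains, because all the real work is done in Theorem \ref{CH}. That work consists of the squeeze $|dgm(V_z)|\leq|dgm(V_y)|\leq|dgm(V_x)|$ coming from Theorem \ref{K}, together with the monotonicity of death times from Theorem \ref{E}. The only point to check is that Theorem \ref{CH} needs just the three comparabilities $x\leq y\leq z$, not that $y$ be a cover of $x$ or of $z$. This means we may apply it directly to $x_1\leq x_i\leq x_k$ without any induction along the chain.
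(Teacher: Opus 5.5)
Your proof is correct and matches the paper's intended argument. The paper states this corollary without proof as an immediate generalization of Theorem \ref{CH}, and your application of that theorem to $x_1\leq x_i\leq x_k$ for each $i$, followed by transitivity of $\sim$, is exactly that generalization.
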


    The above result shows that the equivalence classes induced by the relation are convex with respect to the partial order of the space. Indeed, if two comparable elements belong to the same equivalence class, then every element lying between them also belongs to that class.
	  
	\section{\textbf{\large{Extending the Construction to Finite Spaces}}}\label{6}
	
	In the previous sections, we associated a persistence module to each element of a finite $T_0$ space, studied the structure of the corresponding persistence diagrams, and classified elements of the space according to their associated persistence diagrams.
	In this section, using the furtherness preserving property \cite{Badra}, we extend these constructions and results from finite $T_0$ spaces to arbitrary finite spaces.\par 
	Let $X$ be a finite space and $X_0$ be the quotient space $X/$$\sim_0$, where $x\sim_0 y$ if $U_x=U_y$. By Proposition 1.3.1 of \cite{Barmak}, $X_0$ is a finite $T_0$ space and homotopically equivalent to $X$.  Further, by Theorem 2.22 of \cite{Badra}, we get $\Psi([x],[y])=\Psi(x,y)$ for all $x,y\in X$, where $\Psi$ is the furtherness in respective spaces as introduced in \cite{Badra}. Thus, the quotient map $q:X\rightarrow X_0$ is a furtherness preserving map. We associate a persistence module with each element of $X$ using the persistence module of the corresponding element in the quotient space $X_0$.\par
	For $x\in X$ and $k\geq 0$, let $X_x^k=\{y\in X~|~ \Psi(x,y)\leq k\}$. Note that if $y\sim_0 x$, then $y\in X_x^0$, and hence $[x]\subseteq X_x^0$. Similarly, in the filtration $\{X_x^k~|~ k\geq 0\}$, whenever a point  $y$ enters $X_x^t$, every element of the equivalence class $[y]$ also enters $X_x^t$, at the same level $t$.  Consequently, every element of $[x]$ determines the same filtration.
    Note that, if  $y$ enters $X_x^t$, then $[y]$ enters $X^t_{0_{[x]}}$, where $X^t_{0_{[x]}}=\{[y]\in X_0 ~|~ \Psi_{X_0}([x],[y])\leq t\}$. In fact, the topolgical evolution of $X$ from $x$  is identical to the topolgical evolution of $X_0$ from $[x]$ with respect to their respective furthernesses. Since, $X_0$ is a finite $T_0$ space, we have an associated order complex $\mathcal{S}(X_0)$. By the construction in Section \ref{3}, we have a persistence module $V_{[x]}: \mathbb{R} \rightarrow \mathbf{Vect}_{\mathbb{K}}$ such that $V_{[x]}(a)=H_0(F_{[x]}(a),\mathbb{K}), ~\forall~ [x]\in X_0$. Hence for every element $x$ of a finite space  $X$, we can associate a persistence module $V_{[x]}$.\par 
    
     In the following results, $X_0$ denotes the quotient space of a finite space $X$ as constructed above.

	\begin{definition}\label{GY}
		Let $X$ be a finite space and  $x\in X$. The Persistence diagram $dgm(V_x)$ assiciated with $x$ is defined as the persistence diagram $dgm(V_{[x]})$ associated with $[x]$ in the quotient space $X_0$. 
	\end{definition}
	
	 The following results describe the structure of the persistence diagrams associated with the persistence modules of elements of a finite space $X$. These results explain when a class appears and disappears in the persistence diagram of the persistence module $V_x$ associated with $x\in X$.

	\begin{theorem}\label{A'}
	Let $X$ be a finite space and $x\in X$. Then
	\begin{itemize}
		\item[$(i)$] $[0,\infty)\in dgm(V_{x})$ with multiplicity one.
		\item[$(ii)$] If $X_0$ has $m$ minimal points which are not comparable with $[x]$, then $m$ new classes appear at level 1.
		\item[$(iii)$] No new class appear after level 1.
	\end{itemize}
	
\end{theorem}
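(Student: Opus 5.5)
This is essentially a transfer result. By Definition \ref{GY}, $dgm(V_x)$ is by definition $dgm(V_{[x]})$, computed in the quotient $X_0=X/\sim_0$. By Proposition 1.3.1 of \cite{Barmak}, $X_0$ is a finite $T_0$ space. The plan is therefore to apply Theorem \ref{A} to the element $[x]$ of the finite $T_0$ space $X_0$ and read off the three assertions. Nothing about the filtration of $X$ itself needs to be recomputed; all the work was done in Section \ref{4}.

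Concretely, I would proceed in the following order.

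\emph{Step 1.} Observe that $V_x$ is, by construction, the persistence module $V_{[x]}$ obtained from the filtration $\{\mathcal{S}(X^k_{0_{[x]}})\}_{k\geq 0}$ of $\mathcal{S}(X_0)$. This filtration is induced by the monotonic function $f_{[x]}$ built from $\Psi_{X_0}$.

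\emph{Step 2.} Invoke Theorem \ref{A}(i) for $[x]\in X_0$. This gives that $[0,\infty)$ occurs in $dgm(V_{[x]})=dgm(V_x)$ with multiplicity one, which is (i).

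\emph{Step 3.} Invoke Theorem \ref{A}(ii). The minimal points of $X_0$ not comparable with $[x]$ each give a new class born at level $1$, and there are exactly $m$ of them, which is (ii).

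\emph{Step 4.} Invoke Theorem \ref{A}(iii) to conclude that no class is born after level $1$, which is (iii).

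To make the statement meaningful as a statement about $X$, I would add a brief remark. By Theorem 2.22 of \cite{Badra}, $\Psi_{X_0}([x],[y])=\Psi_X(x,y)$, so $y$ enters $X_x^t$ exactly when $[y]$ enters $X^t_{0_{[x]}}$. Hence the levels in the statement agree with the furtherness levels in $X$. Comparability of $[x]$ and $[a]$ in $X_0$ means $\Psi_X(x,a)=0$ or $\Psi_X(a,x)=0$. Minimality of $[a]$ corresponds to $U_a$ being minimal among the basic open sets of $X$.

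The only point requiring care, and the only real obstacle, is checking that the hypotheses of Theorem \ref{A} are satisfied by $X_0$. Specifically, $X_0$ must be $T_0$ so that Remark \ref{2.10} applies to $\Psi_{X_0}$, and the filtration used in Definition \ref{GY} must be exactly the one from Section \ref{3}. Both follow directly from the cited results, so I expect the proof to be a short reduction.
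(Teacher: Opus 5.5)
Your proposal is correct and matches the paper. The paper states Theorem \ref{A'} without a separate proof, because by Definition \ref{GY} the diagram $dgm(V_x)=dgm(V_{[x]})$ is computed in the finite $T_0$ space $X_0$, so the three items are Theorem \ref{A} applied to $[x]\in X_0$, exactly as you reduce it; your side remark translating levels back to $X$ via $\Psi_{X_0}([x],[y])=\Psi_X(x,y)$ agrees with the discussion preceding Definition \ref{GY} but is not needed for the statement as phrased.
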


\begin{theorem}\label{D'}
	Let $X$ be a finite space and $x\in X$. Let $[b]$ be a minimal point of $X_0$ not comparable with $[x]$. If a class born at level 1 in $dgm(V_{x})$ corresponding to $[b]$, dies at level $n_{[b]}<\infty,$ then there is an element in $X_{[x]}^{n_{[b]}}$ which is comparable with both $[b]$ and another minimal point of $X_0$. Moreover, if no such element exists then $n_{[b]}=\infty$.
\end{theorem}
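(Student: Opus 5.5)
The plan is to reduce Theorem \ref{D'} to Theorem \ref{D}, applied to the finite $T_0$ space $X_0$ and the element $[x]\in X_0$. By Definition \ref{GY}, $dgm(V_x)$ is by construction the diagram $dgm(V_{[x]})$ of the persistence module built in Section \ref{3} from the filtration $\{\mathcal{S}(X^k_{0_{[x]}})\}_{k\geq 0}$ of $\mathcal{S}(X_0)$. Hence a class of $dgm(V_x)$ born at level $1$ corresponding to $[b]$ is literally a class of $dgm(V_{[x]})$ born at level $1$ corresponding to the minimal point $[b]$ of $X_0$. Its death level $n_{[b]}$ is the same number in both diagrams.

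First I will check that the hypotheses of Theorem \ref{D} hold for $X_0$. By Proposition 1.3.1 of \cite{Barmak}, $X_0$ is a finite $T_0$ space, and $[b]$ is a minimal point of $X_0$ not comparable with $[x]$ by assumption. Theorem \ref{D} then gives an element $[y]\in X_{0_{[x]}}^{n_{[b]}}$ that is comparable with $[b]$ and with another minimal point of $X_0$. It also gives $n_{[b]}=\infty$ when no such element exists.

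The only real point is interpreting $X_{[x]}^{n_{[b]}}$ in the statement. I read it as the sublevel set $X^{n_{[b]}}_{0_{[x]}}=\{[y]\in X_0 \mid \Psi_{X_0}([x],[y])\leq n_{[b]}\}$. If instead it is meant as a subset of $X$, I will transfer the conclusion using the furtherness-preserving property $\Psi_{X_0}([x],[y])=\Psi_X(x,y)$ (Theorem 2.22 of \cite{Badra}). This gives $X_x^{t}=q^{-1}(X^t_{0_{[x]}})$ for every $t$. So any representative $y$ of $[y]$ lies in $X_x^{n_{[b]}}$, and it is comparable in the preorder of $X$ with any representative of $[b]$ and of the other minimal class. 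This holds because the order on $X_0$ is induced by $[u]\leq[v]\iff u\leq v$.

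I expect no serious obstacle. The one step needing care is this compatibility between the filtrations of $X$ and $X_0$ under $q$, which was already noted in Section \ref{6}.
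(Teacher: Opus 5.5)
Your proof is correct and follows the paper's route: the paper gives no separate proof of Theorem \ref{D'} and obtains it, via Definition \ref{GY}, by applying Theorem \ref{D} to the finite $T_0$ quotient $X_0$ and the element $[x]$, which is exactly your reduction. Your additional check that $X_x^t=q^{-1}(X^t_{0_{[x]}})$, using the furtherness-preserving quotient map and the induced order $[u]\leq[v]\iff u\leq v$, correctly covers the alternative reading of $X_{[x]}^{n_{[b]}}$ as a subset of $X$.
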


\begin{remark}\label{M''}{\normalfont
		Let $X_0$ be the quotient space of a finite space $X$ and $x,y\in X$. We define a relation on $X$ by setting $x\sim' y$ if and only if $[x]$ and  $[y]$ are related by the relation defined in Remark \ref{M}. By Theorem \ref{M'}, it is equivalent to say that $x\sim' y \iff dgm(V_{[x]})=dgm(V_{[y]})$.\par 
		Notice that the quotient map $p:X\rightarrow X/$$\sim'$ factors as $p=\pi\circ q$, where $q: X\rightarrow X_0$ is the quotient map, and $\pi: X_0\rightarrow X_0/$$\sim$ is the quotient map induced by the equivalence relation $\sim$ defined in Remark \ref{M}. Thus, the quotient space $X/$$\sim'$ can be obtained by first passing to the $T_0$-quotient $X_0= X/$$\sim_0$, and then quotienting by $\sim$. Therefore, $X/$$\sim'$ and $X_0/$$\sim$ are topologically identical, as both quotient spaces have the same cardinality and the same topology. Consequently, $X/$$\sim'$ $= X_0/$$\sim$. Thus, the quotient space $X/$$\sim'$  is completely determined by the homotopically equivalent $T_0$ quotient space $X_0$. Furthermore, if $X$ is a $T_0$ space, then the equivalence relation $\sim'$ coincides with $\sim$.}
\end{remark}

The remaining constructions and results can be generalized from finite $T_0$ spaces to arbitrary finite spaces in an analogous manner. In the next result, we study the relationship between the persistence diagram associated with an element and the persistence diagram associated with its image under a furtherness preserving map.\par 
 Let $f: X\rightarrow Y$ be a furtherness preserving  map between two finite $T_0$ spaces. If $f$ is not injective, then $f(x_1)=f(x_2)$ for some $x_1 \neq x_2$ in  $X$. As $f$ is furtherness preserving map, $\Psi_X(x_1,x_2)= \Psi_X(x_2,x_1)=0 \implies U_{x_1}=U_{x_2}$, a contradiction, as $X$ is $T_0$. So, we have

\begin{lemma}\label{T}
     A furtherness preserving  map between two finite $T_0$ spaces is injective. 
\end{lemma}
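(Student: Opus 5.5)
The plan is to argue by contradiction, exactly along the lines sketched just before the statement. Let $f:X\rightarrow Y$ be a furtherness preserving map between finite $T_0$ spaces, meaning $\Psi_Y(f(x),f(x'))=\Psi_X(x,x')$ for all $x,x'\in X$. Suppose $f$ is not injective, and choose $x_1\neq x_2$ in $X$ with $f(x_1)=f(x_2)$. The goal is to show that then $U_{x_1}=U_{x_2}$, which the $T_0$ axiom forbids.

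First I would record that $\Psi_Y(y,y)=0$ for every $y\in Y$. By Remark \ref{2.10}, $\Psi_Y(y,y)=|U_y\backslash U_y|=0$; alternatively, $y\in U_y=U_0$ directly from the definition of furtherness. Applying this with $y=f(x_1)=f(x_2)$ and using the furtherness preserving property gives
\[
\Psi_X(x_1,x_2)=\Psi_Y(f(x_1),f(x_2))=0, \qquad \Psi_X(x_2,x_1)=\Psi_Y(f(x_2),f(x_1))=0 .
\]

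Next I would translate these equalities back into the order. By Remark \ref{2.10}, $\Psi_X(x_1,x_2)=|U_{x_2}\backslash U_{x_1}|=0$, so $U_{x_2}\subseteq U_{x_1}$. Symmetrically, $\Psi_X(x_2,x_1)=0$ gives $U_{x_1}\subseteq U_{x_2}$. Hence $U_{x_1}=U_{x_2}$, that is, $x_1\leq x_2$ and $x_2\leq x_1$. Since $X$ is $T_0$, the preorder is a partial order, so antisymmetry forces $x_1=x_2$. This contradicts the choice of $x_1\neq x_2$, and therefore $f$ is injective.

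No step here is a real obstacle; the only point needing care is to make explicit that $\Psi(y,y)=0$ and that $T_0$ means $U_{x_1}=U_{x_2}$ implies $x_1=x_2$. I would also note that only the $T_0$ property of $X$ is used; the hypothesis that $Y$ is $T_0$ matters only to the extent that Remark \ref{2.10} is invoked in $Y$, and even that can be bypassed using the definition directly.
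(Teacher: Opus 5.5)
Your proof is correct and is the paper's argument: from $f(x_1)=f(x_2)$ and furtherness preservation you get $\Psi_X(x_1,x_2)=\Psi_X(x_2,x_1)=0$, hence $U_{x_1}=U_{x_2}$, which contradicts the $T_0$ property of $X$. You also make explicit the step $\Psi_Y(y,y)=0$, which the paper leaves implicit, and you correctly note that only the $T_0$ property of $X$ is really needed.
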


    Let $f: X\rightarrow Y$ be a furtherness preserving  map between two finite spaces $X$  and $Y$. For $x\in X$, if   $x_0\in [x]$ then  $\Psi_X(x,x_0)= \Psi_X(x_0,x)=0 \implies \Psi_Y(f(x),f(x_0))= \Psi_Y(f(x_0),f(x))=0\implies U_{f(x)}=U_{f(x_0)}.$ This gives $x\sim_0 x_0 \implies f(x)\sim_0 f(x_0)$. So, the map $\overline{f}: X_0\rightarrow Y_0$  between their respective quotient spaces defined by $\overline{f}([x])=[f(x)]$, is a well defined map. As the quotient map $q:X\rightarrow X_0$  is a furtherness preserving map, we have
    
    \begin{lemma}\label{TH}
    	Let $f: X\rightarrow Y$ be a furtherness preserving  map between two finite spaces $X$  and $Y$. Then $f$ induces a furtherness preserving map $\overline{f}: X_0\rightarrow Y_0$ between their respective quotient spaces $X_0$ and $Y_0$.
    \end{lemma}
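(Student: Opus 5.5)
The plan is to define $\overline{f}$ through the quotient maps, check that it is well defined, and then obtain the furtherness identity by composing with the furtherness preserving quotient maps. Let $q_X:X\rightarrow X_0$ and $q_Y:Y\rightarrow Y_0$ be the quotient maps. The discussion preceding the statement shows that $x\sim_0 x_0$ implies $f(x)\sim_0 f(x_0)$. Hence $\overline{f}([x])=[f(x)]$ does not depend on the chosen representative, so $\overline{f}$ is a well defined map with $\overline{f}\circ q_X=q_Y\circ f$.

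Next I will verify that $\overline{f}$ preserves furtherness. Take $[x],[y]\in X_0$. By Theorem 2.22 of \cite{Badra}, which says that $q_X$ is furtherness preserving, we have $\Psi_{X_0}([x],[y])=\Psi_X(x,y)$. Since $f$ is furtherness preserving, $\Psi_X(x,y)=\Psi_Y(f(x),f(y))$. Applying the same theorem to $q_Y$ gives $\Psi_Y(f(x),f(y))=\Psi_{Y_0}([f(x)],[f(y)])$, and this equals $\Psi_{Y_0}(\overline{f}([x]),\overline{f}([y]))$. Chaining these equalities yields $\Psi_{Y_0}(\overline{f}([x]),\overline{f}([y]))=\Psi_{X_0}([x],[y])$ for all $[x],[y]\in X_0$.

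If the notion of a furtherness preserving map in \cite{Badra} also requires continuity, I will check that as well. Continuity of maps between finite $T_0$ spaces is equivalent to order preservation. The relation $[x]\leq[y]$ holds exactly when $\Psi_{X_0}([y],[x])=0$, which by Remark \ref{2.10} means $U_{[x]}\subseteq U_{[y]}$. The identity just proved shows that this relation is preserved by $\overline{f}$, so $\overline{f}$ is order preserving and hence continuous. Alternatively, continuity follows from the universal property of the quotient $q_X$, since $q_Y\circ f$ is continuous.

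The only delicate point is well-definedness, and it is already handled by the argument before the statement. By Lemma \ref{T}, it follows that $\overline{f}$ is moreover injective.
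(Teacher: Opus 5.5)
Your proposal is correct and follows the paper's own argument. You get well-definedness of $\overline{f}([x])=[f(x)]$ because $f$ preserves $\Psi_X(x,x_0)=\Psi_X(x_0,x)=0$, and you then chain $\Psi_{X_0}([x],[y])=\Psi_X(x,y)=\Psi_Y(f(x),f(y))=\Psi_{Y_0}([f(x)],[f(y)])$ using that the quotient maps preserve furtherness; your extra remarks on continuity and injectivity (via Lemma \ref{T}) are also correct, though not needed for the statement.
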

    
   By Lemma \ref{T} and Lemma \ref{TH}, if $f: X\rightarrow Y$ is a furtherness preserving surjective map between two finite spaces, then the induced map $\overline{f}: X_0\rightarrow Y_0$ is a furtherness preserving bijective map. By Proposition 2.18 of \cite{Badra}, both $\overline{f}$ and its inverse are continuous maps. Therefore, $\overline{f}: X_0\rightarrow Y_0$ is a homeomorphism. Thus, $dgm(V_{[x]})= dgm(V_{[f(x)]})$, for $[x]\in X_0$. So, we have

\begin{theorem}\label{KSH}
	 Let $f: X\rightarrow Y$ be a furtherness preserving surjective map between two finite spaces. Then $dgm(V_x)= dgm(V_{f(x)}), ~\forall~ x\in X$.
\end{theorem}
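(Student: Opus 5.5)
The plan is to reduce everything to the $T_0$ quotients and then transport the filtration along a simplicial isomorphism. By Definition \ref{GY}, $dgm(V_x)=dgm(V_{[x]})$ computed in $X_0$, and $dgm(V_{f(x)})=dgm(V_{[f(x)]})$ computed in $Y_0$. So it suffices to show $dgm(V_{[x]})=dgm(V_{\overline{f}([x])})$ for every $[x]\in X_0$, where $\overline{f}:X_0\rightarrow Y_0$ is the map of Lemma \ref{TH}.

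First, I will check that $\overline{f}$ is a furtherness preserving bijection. Lemma \ref{TH} gives that it is furtherness preserving, and Lemma \ref{T} gives injectivity. Surjectivity follows from the surjectivity of $f$: every class $[y]$ of $Y_0$ has a preimage $[x]$ with $f(x)=y$. By Proposition 2.18 of \cite{Badra}, $\overline{f}$ and $\overline{f}^{-1}$ are continuous. Hence $\overline{f}$ is an order isomorphism of the posets $X_0$ and $Y_0$, since continuity between finite spaces is the same as order preservation.

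Next, I will show that $\overline{f}$ carries the filtration of $[x]$ onto the filtration of $\overline{f}([x])$. Because $\overline{f}$ is an order isomorphism, it maps chains bijectively to chains, so it induces a simplicial isomorphism $\mathcal{S}(\overline{f}):\mathcal{S}(X_0)\rightarrow\mathcal{S}(Y_0)$. For a simplex $\langle a_0,\dots,a_n\rangle$ we have
$f_{\overline{f}([x])}(\langle \overline{f}(a_0),\dots,\overline{f}(a_n)\rangle)=\max_i\Psi_{Y_0}(\overline{f}([x]),\overline{f}(a_i))=\max_i\Psi_{X_0}([x],a_i)=f_{[x]}(\langle a_0,\dots,a_n\rangle)$,
using that $\overline{f}$ preserves furtherness. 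Thus $\mathcal{S}(\overline{f})$ maps each sublevel complex $\mathcal{S}(X_0)^k_{[x]}$ isomorphically onto $\mathcal{S}(Y_0)^k_{\overline{f}([x])}$. Since these isomorphisms are restrictions of a single map, they commute with the inclusions of the filtration.

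Finally, applying $H_0(-,\mathbb{K})$ yields a natural isomorphism of persistence modules $V_{[x]}\cong V_{\overline{f}([x])}$. Isomorphic persistence modules have identical persistence diagrams, so $dgm(V_x)=dgm(V_{f(x)})$.

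The main obstacle is the surjectivity of $\overline{f}$ together with the continuity of $\overline{f}^{-1}$. Without surjectivity, $Y_0$ could contain extra minimal points that create additional classes at level $1$ in the diagram of $f(x)$, so the argument would fail. The continuity of the inverse is taken from \cite{Badra}. Alternatively, it follows directly from Remark \ref{2.10}: $\Psi(a,b)=0$ if and only if $b\leq a$, and preservation of these zeros means $\overline{f}$ both preserves and reflects the order.
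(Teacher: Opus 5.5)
Your proposal is correct and follows the paper's own route. You pass to $\overline{f}\colon X_0\rightarrow Y_0$, use Lemmas \ref{T} and \ref{TH} together with the surjectivity of $f$ to get a furtherness preserving bijection, invoke Proposition 2.18 of \cite{Badra} to make it a homeomorphism, and conclude that the diagrams agree; the only difference is that you spell out the final step the paper leaves implicit, namely that the induced simplicial isomorphism $\mathcal{S}(X_0)\rightarrow\mathcal{S}(Y_0)$ carries $f_{[x]}$ to $f_{\overline{f}([x])}$ and so yields an isomorphism $V_{[x]}\cong V_{\overline{f}([x])}$ of persistence modules.
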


	\section{\textbf{\large{Bottleneck Stability in Finite Spaces}}}\label{7}
	
In this section, we establish a stability result for the persistence diagrams associated with elements of a finite space. We show that the bottleneck distance between the persistence diagrams of two elements is bounded above by the maximum furtherness from one another.\par 

	For any finite space $X$ and $x\in X$, there is a monotonic map $f_{x}:\mathcal{S}(X_0)\rightarrow \mathbb{R}$ defined as  $f_x(\sigma )=\max\{\Psi(x,a_i)~|~ 0\leq i\leq n\},$ for $\sigma= \langle a_0,a_1,...,a_n\rangle	\in \mathcal{S}(X_0).$ By the  construction of Section \ref{3}, $f_x$ induces the filtration $F_x$, and hence the associated persistence module $V_x$. Therefore, the $0^{th}$ persistence diagram of $f_x$ is $dgm(V_x)$.\par 
	Note that, for $x,y\in X$, we get $f_x(\sigma)=\Psi(x,a_l)$ and $f_y(\sigma)=\Psi(y,a_m)$, for some $0\leq l,m\leq n$. As furtherness satisfies triangle inequality \cite{Badra}, and $\Psi(y,a_l)\leq \Psi(y,a_m) \implies \Psi(x,a_l)- \Psi(y,a_m)\leq  \Psi(x,y)$. Similarly, $\Psi(x,a_m)\leq \Psi(x,a_l) \implies \Psi(y,a_m)- \Psi(x,a_l)\leq \Psi(y,x)$. Therefore, $|f_x(\sigma)-f_y(\sigma)|\leq  \max\{\Psi(x,y),\Psi(y,x)\}$. Hence $||f_x-f_y||_{\infty}= \sup\limits_{\sigma\in \mathcal{S}(X_0)}|f_x(\sigma)-f_y(\sigma)|\leq \max\{\Psi(x,y),\Psi(y,x)\}$. By Proposition \ref{S}, we get $d_B(dgm(V_x),dgm(V_y))\leq \max\{\Psi(x,y),\Psi(y,x)\}$.\par 
	 So, we have the following stability result for finite topological spaces.

	\begin{theorem}[Bottleneck Stability]\label{SH}
		Let $X$ be a finite space and $x,y\in X$. Then the bottleneck distance between the persistence diagrams  associated with $x$ and $y$ is bounded above by the maximum of their mutual furtherness; that is,  $d_B(dgm(V_x),dgm(V_y)) \leq \max\{\Psi(x,y),\Psi(y,x)\}$.
	\end{theorem}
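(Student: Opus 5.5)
The plan is to reduce the statement to the Stability Theorem for Filtrations (Proposition \ref{S}). Both diagrams $dgm(V_x)$ and $dgm(V_y)$ are realized as $0$-dimensional persistence diagrams of sublevel filtrations of a \emph{single} simplicial complex. It then suffices to bound the $L_\infty$-distance between the two filtering functions by $\max\{\Psi(x,y),\Psi(y,x)\}$.

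\textbf{Setting up the filtrations.} By Definition \ref{GY}, $dgm(V_x)=dgm(V_{[x]})$ and $dgm(V_y)=dgm(V_{[y]})$ computed in the $T_0$ quotient $X_0$. Since the quotient map $q:X\rightarrow X_0$ is furtherness preserving, I will work on the common complex $K=\mathcal{S}(X_0)$ with the functions
\[
f_x(\sigma)=\max_{0\leq i\leq n}\Psi(x,a_i),\qquad f_y(\sigma)=\max_{0\leq i\leq n}\Psi(y,a_i),\qquad \sigma=\langle a_0,\dots,a_n\rangle\in K.
\]
Section \ref{3} already shows these are monotonic and that their sublevel filtrations coincide with $\{\mathcal{S}(X^k_{0_{[x]}})\}$ and $\{\mathcal{S}(X^k_{0_{[y]}})\}$. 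Hence their $0$-th diagrams are exactly $dgm(V_x)$ and $dgm(V_y)$.

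\textbf{Bounding $\|f_x-f_y\|_\infty$.} Fix $\sigma$ and choose indices $l,m$ with $f_x(\sigma)=\Psi(x,a_l)$ and $f_y(\sigma)=\Psi(y,a_m)$. The triangle inequality for furtherness \cite{Badra}, in the form $\Psi(x,a)\leq\Psi(x,y)+\Psi(y,a)$, gives
\[
f_x(\sigma)=\Psi(x,a_l)\leq \Psi(x,y)+\Psi(y,a_l)\leq \Psi(x,y)+f_y(\sigma).
\]
Symmetrically, $f_y(\sigma)\leq\Psi(y,x)+f_x(\sigma)$. Therefore $|f_x(\sigma)-f_y(\sigma)|\leq\max\{\Psi(x,y),\Psi(y,x)\}$ for every simplex, and taking the supremum over the finitely many simplices gives the same bound on $\|f_x-f_y\|_\infty$. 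Applying Proposition \ref{S} with $p=0$ then yields the claim.

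\textbf{Main obstacle.} The main point needing care is the orientation of the triangle inequality, since $\Psi$ is asymmetric. One must check the version $\Psi(x,z)\leq\Psi(x,y)+\Psi(y,z)$, which via Remark \ref{2.10} reads $|U_z\setminus U_x|\leq|U_y\setminus U_x|+|U_z\setminus U_y|$. This holds because $U_z\setminus U_x\subseteq (U_z\setminus U_y)\cup(U_y\setminus U_x)$. It is this asymmetry that forces the maximum of the two mutual furthernesses in the bound.

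A second, minor point is the infinite classes. They occur in both diagrams with the same birth $0$ or $1$ and equal number, equal to the number of components of $\mathcal{S}(X_0)$. With the convention $\infty-\infty=0$, they are matched within the bound, and this is already covered by Proposition \ref{S}.
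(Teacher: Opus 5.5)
Your proposal is correct and follows essentially the same route as the paper. Both realize $dgm(V_x)$ and $dgm(V_y)$ as $0$-dimensional sublevel diagrams of $f_x,f_y$ on $\mathcal{S}(X_0)$, bound $\|f_x-f_y\|_\infty$ by $\max\{\Psi(x,y),\Psi(y,x)\}$ using the triangle inequality for furtherness, and conclude with Proposition \ref{S}; your explicit check of the orientation of that inequality via Remark \ref{2.10} is a helpful addition that the paper simply cites from \cite{Badra}.
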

	
The above theorem shows that small variations in the furtherness between elements lead to only limited changes in their associated persistence diagrams. Thus, the persistent homology arising from the furtherness filtration is stable under perturbations of the underlying furtherness function. This theorem also provides a practical estimate for comparing persistence diagrams.

\end{document}